\newcommand*{\TakeFourierOrnament}[1]{{%
\fontencoding{U}\fontfamily{futs}\selectfont\char#1}}
\newcommand*{\danger}{\TakeFourierOrnament{66}}
\renewcommand{\AA}{\mathbb{A}}
\newcommand{\cI}{\mathcal{I}}
\newcommand{\PP}{\mathbb{P}}
\newcommand{\ZZ}{\mathbb{Z}}
\newcommand{\QQ}{\mathbb{Q}}
\newcommand{\OO}{\mathcal{O}}
\newcommand{\zom}{\mathbb{Z} / m}
\newcommand{\op}{\ensuremath{^\mathit{op}}}
\newcommand{\eff}{\ensuremath{^\mathit{eff}}}
\newcommand{\gm}{\ensuremath{_\mathit{gm}}}
\newcommand{\equi}{\ensuremath{_\mathit{equi}}}
\newcommand{\singh}{\ensuremath{^\mathit{sing}}}
\newcommand{\singc}{\ensuremath{_\mathit{sing}}}
\newcommand{\et}{\ensuremath{_\mathit{et}}}
\newcommand{\tqfh}{\ensuremath{\mathit{qfh}}}
\newcommand{\PreShv}{\textsf{PreShv}}
\newcommand{\Ab}{\textsf{Ab}}
\newcommand{\Shv}{\textsf{Shv}}
\newcommand{\Sm}{\textsf{Sm}}
\newcommand{\Sch}{\textsf{Sch}}
\newcommand{\Cor}{\textsf{Cor}}
\newcommand{\Comp}{\textsf{Comp}}
\newcommand{\zpi}{\ZZ[\tfrac{1}{p}]}
\newcommand{\opi}{[\tfrac{1}{p}]}
\newtheorem{theo}{Théorème}
\newtheorem{prop}[theo]{Proposition}
\newtheorem*{prop*}{Proposition}
\newtheorem*{coro*}{Corollaire}
\newtheorem{lemm}[theo]{Lemme}
\newtheorem{coro}[theo]{Corollaire}
\theoremstyle{theorem}
\theoremstyle{definition}
\newtheorem{defi}[theo]{Définition}
\newtheorem{rema}[theo]{Remarque}
\DeclareMathOperator{\id}{id} 
\DeclareMathOperator{\Tot}{Tot}
\DeclareMathOperator{\uhom}{\underline{hom}}
\DeclareMathOperator{\length}{length}
\DeclareMathOperator{\Tr}{Tr}
\DeclareMathOperator{\RHom}{RHom}
\newcommand{\viso}{\rotatebox{270}{$\cong$}}
\newcommand{\isoCHsing}{\Phi_\mathit{s}^\mathit{CH}}
\newcommand{\isoCHsingcoh}{\Phi_\mathit{s, et}}
\newcommand{\isosingmot}{\Phi^\mathit{s}_\mathit{M}}
\newcommand{\RIvo}{\Phi^\mathit{M}_\mathit{et}}
\newcommand{\isoCHmot}{\Phi^\mathit{CH}_\mathit{M}}
\newcommand{\duas}{\mathcal{D}_s}
\newcommand{\duaet}{{\mathcal{D}\et}}
\newcommand{\trCH}{{\mathit{Tr}^{\mathit{CH}}}}
\newcommand{\trEt}{{\mathit{Tr}^{\mathit{et}}}}
\newcommand{\trM}{{\mathit{Tr}^{\mathit{M}}}}
\newcommand{\PhiR}{{\Phi_R}}
\newcommand{\PhiS}{{\Phi_{\mathit{Sus}}}}
\newcommand{\PhiBL}{{\Phi_{\mathit{BL}}}}
\title{Un isomorphisme de Suslin}
\author{Shane Kelly}
\begin{document}

\maketitle

%To do: 
%\begin{enumerate}
% \item why does the spectral sequence converge
% \item why does it not generalise to $j < 0$
% \item compatibility with duality 
%\end{enumerate}

\begin{abstract}
Dans cette note on constate qu'on peut enlever l'hypothèse de la résolution des singularités de l'isomorphisme construit par Suslin entre la cohomologie étale à supports compacts et les groupes de Chow supérieurs de Bloch. On démontre de plus que l'on peut obtenir cet isomorphisme à partir de la réalisation étale d'Ivorra. %On obtient comme corollaire que cette isomorphisme est le morphisme de la conjecture de Beilinson-Lichtenbaum pour une variété lisse, sous les identifications canoniques fournies par la dualité, 
\end{abstract}

%\begin{rema}
% POSSIBLE EXTENSION IDEA  The part that breaks down in Suslin's isomorphism is identifying $Ext_h(z_r X, \Lambda)$ as $H_c^{n + 2r}(X, \Lambda(r)) = \hom(\Lambda, f_!f^*\Lambda(r)[2r + n])$. We should have
%\[ \underline{C}_\ast z_r X \cong \uhom((\PP^1, \infty)^{\wedge r}, \underline{C}_\ast z_0X) \]
% [FV, 8.3(3)] so now we are trying to identify $Ext_h^n(\uhom((\PP^1, \infty)^{\wedge r}, \underline{C}_\ast z_0X), \Lambda)$. Since $\Lambda$ is homotopy invariant, this looks like a question about 
% \[ \hom(M^c(X)(-r)[-2r], \Lambda) \]
% in Voevodsky's category of $h$-motives $DM_h\eff$. We want to identify this as 
% \[ \hom(M^c(X), \mu^r[2r + n]) \]
% and then hope that the $h$/étale comparison in the appendix of [SV96] still works for twisted coefficients.

%Since $- \otimes (\PP^1, \infty)$ is full faithful in $DM\eff$, maybe $\uhom((\PP^1, \infty), -)$ is also fully faithful. No, if both adjoints are fully faithful, then both counit and unit are isomorphisms, and the functors are equivalences. If the left adjoint is full faithful then $\id \to FG$ is an isomorphism. That is, $X \to \uhom((\PP^1, \infty), X\otimes (\PP^1, \infty))$ is an isomorphism. This means that $M^c(X) \to \uhom((\PP^1, \infty), M^c(\AA^1_X))$ is an isomorphism?

%Comes down to a delooping problem which is probably hard.
%\end{rema}

%\tableofcontents

\section{Introduction}

Un premier but de ce note est d'observer qu'en remplaçent le théorème \cite[Theorem 4.1.2]{Voev00} de Voevodsky avec le résultat principal (\cite[Theorem 5.3.1]{Kel12}) de la thèse de l'auteur, on peut enlever l'hypothèse de la résolution des singularités dans le résultat principal de \cite{Sus00}.

\begin{theo}[{\cite[Theorem 5.6.1]{Kel12}, cf. \cite[Introduction]{Sus00}}] \label{theo:susMain}
Soit $k$ un corps algébriquement clos, $m$ un entier inversible dans $k$, $\pi_X: X \to k$ un morphisme quasi-projectif équidimensionnel de dimension $d$ et soit $j \geq 0$. Alors il existe un isomorphisme naturel
\[ \PhiS: H^{n - 2j}_c(X, \ZZ / m(-j)) \cong CH^{j + d}(X, n; \ZZ / m)^\# \]
où $H_c$ est la cohomologie étale à support compact, $CH$ sont les groupes supérieurs de Chow, et on utilise $(-)^\#$ pour indiquer $\hom_{Ab}(-, \QQ / \ZZ)$.
\end{theo}

Suslin définit cet isomorphisme à partir d'un isomorphisme $CH^{d}(X, n; \ZZ / m) \cong H_n^{sing}(X, \ZZ / m)$ (démontré dans \cite{Sus00}), un isomorphisme $H^{n}_c(X, \ZZ / m) \cong H^n\singc(X, \ZZ / m)$ (démontré dans \cite{SV96}), et l'isomorphisme canonique
\begin{equation} \label{equa:singPair}
H^n_{sing}(X, \ZZ / m) \cong H_n^{sing}(X, \ZZ / m)^\#.
\end{equation}

D'un autre côté, on peut définir un deuxième morphisme 
\[ \PhiR: H^{n - 2j}_c(X, \ZZ / m(-j)) \to CH^{j + d}(X, n; \ZZ / m)^\# \]
à partir du morphisme canonique 
\begin{equation} \label{equa:etalPair}
\hom(\ZZ / m, R\pi_{X!}(\ZZ / m)_X(p)[q]) \to \hom(R\pi_{X!}(\ZZ / m)_X(p)[q], \ZZ / m)^\#
\end{equation}
où les morphismes sont dans la catégorie dérivée $D_{et}(k, \ZZ / m)$ des faisceaux de $\ZZ / m$-modules sur le petit site étale de $k$ (cf. Définition~\ref{defi:phiIvo}).

La deuxième but de cette note est de démonter que $\PhiS$ est égal à $\PhiR$.

\begin{prop*}[{Proposition~\ref{prop:pairing}, Proposition~\ref{prop:susEqualIvo}}]
Sous les hypothèses du Théorème~\ref{theo:susMain}, l'isomorphisme
\[ H^n\singc(X, \ZZ / m) \cong \hom(\ZZ / m, R\pi_{X!}(\ZZ / m)_X[n]) \]
défini dans \cite{SV96}, et le morphisme
\[ H_n^{sing}(X, \ZZ / m) \to \hom(R\pi_{X!}(\ZZ / m)_X[n], \ZZ / m) \]
de la Définition~\ref{defi:phiIvo} (défini à partir du foncteur de réalisation), sont compatibles avec les accouplements canoniques (\ref{equa:singPair}) et (\ref{equa:etalPair}). Par conséquence, \mbox{$\PhiS$ est égal à $\PhiR$}.
\end{prop*}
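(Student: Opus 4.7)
Mon plan est de ramener les deux accouplements à un même accouplement d'évaluation dans une catégorie dérivée appropriée, puis d'invoquer le caractère monoïdal du foncteur de réalisation étale pour conclure que l'isomorphisme de \cite{SV96} et le morphisme de la Définition~\ref{defi:phiIvo} transportent le premier accouplement sur le second.

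Dans un premier temps, j'observerais que (\ref{equa:singPair}) n'est autre que l'accouplement d'évaluation standard dans la catégorie dérivée des $\ZZ/m$-modules, appliqué au complexe de Suslin de $X$: étant donnés un cocycle $\alpha$ et un cycle $\beta$, l'accouplement envoie $\alpha \otimes \beta$ sur $\alpha(\beta)$. De même, (\ref{equa:etalPair}) est la composition $f \otimes g \mapsto g \circ f$, qui est précisément l'accouplement d'évaluation dans $D_{et}(k, \ZZ/m)$ pour l'objet $F := R\pi_{X!}(\ZZ/m)_X(p)[q]$, en utilisant que $\ZZ/m$ est l'unité tensorielle.

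Le point clé consiste ensuite à reconnaître que le morphisme $H^n\singc(X, \ZZ/m) \cong \hom_{D_{et}}(\ZZ/m, F)$ de \cite{SV96} et le morphisme $H_n^{sing}(X, \ZZ/m) \to \hom_{D_{et}}(F, \ZZ/m)$ de la Définition~\ref{defi:phiIvo} proviennent tous deux, par construction ou par identification, du foncteur de réalisation étale $\RIvo$ appliqué aux complexes de Suslin appropriés. Comme $\RIvo$ est monoïdal, il préserve les accouplements d'évaluation, ce qui établit la compatibilité demandée.

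Je prévois que la principale difficulté réside dans l'interprétation précise de l'isomorphisme de \cite{SV96} en termes du foncteur de réalisation d'Ivorra. La construction originale de Suslin-Voevodsky repose sur un argument de rigidité pour la cohomologie étale à coefficients finis, et il faut vérifier que l'isomorphisme qu'on en obtient coïncide avec celui induit par $\RIvo$. Une fois ce point acquis, l'égalité $\PhiS = \PhiR$ s'ensuit formellement: par définition, les deux morphismes sont obtenus en composant la même identification $CH^{j+d}(X, n; \ZZ/m) \cong H_n^{sing}(X, \ZZ/m)$ de \cite{Sus00} avec la duale de l'isomorphisme cohomologique considéré, et ils ne diffèrent que par l'accouplement utilisé pour la dualisation; la compatibilité établie force donc leur coïncidence.
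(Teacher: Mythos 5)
Votre plan identifie correctement les deux accouplements comme des accouplements d'évaluation, mais il laisse en blanc précisément ce que la proposition demande de démontrer. Le point que vous signalez vous-même comme la principale difficulté --- vérifier que l'isomorphisme de \cite{SV96} est compatible avec celui induit par la réalisation --- n'est pas une vérification de routine que la monoïdalité de $R_\Lambda$ réglerait : $H^n\singc(X,\Lambda)$ n'est pas défini comme un hom dans la cible de $R_\Lambda$, et l'isomorphisme de \cite{SV96} est construit via la résolution $h$-injective $\cI^\ast$ et le passage aux sections globales. Pour comparer effectivement les deux accouplements, il faut représenter les classes par des morphismes explicites dans $K(\PreShv(\Cor/k))$, relever le cycle $\alpha^\dagger: [\Delta^\ast] \to z_0X \otimes \Lambda$ en un morphisme $\beta^\dagger: [\Delta^\ast] \to [X^c/m]$ (ce qui utilise le Théorème~\ref{theo:corMotIso} et la section canonique de la Remarque~\ref{rema:sectionCanonique}), puis constater que l'accouplement singulier et l'accouplement étale correspondent à deux façons de composer à travers $\underline{C}_\ast\underline{C}_\ast$ : l'une utilise l'augmentation $\epsilon_{\underline{C}_\ast}$, l'autre $\underline{C}_\ast(\epsilon)$ (Équation~\ref{equa:twoAug}). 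Ces deux choix ne coïncident pas au niveau des complexes; c'est le Lemme~\ref{lemm:eilenbergZilber} (argument de modèles acycliques) qui fournit l'homotopie entre eux et donc l'égalité dans la catégorie homotopique. Sans cet ingrédient, l'argument \emph{un foncteur monoïdal préserve les évaluations} ne produit pas la compatibilité annoncée : il présuppose exactement l'identification que la proposition affirme.

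Seconde lacune : votre dernière phrase ne couvre en réalité que le cas $j = 0$. Pour $j > 0$, $\PhiS$ et $\PhiR$ ne sont pas obtenus en dualisant le même isomorphisme de \cite{Sus00} : ils passent par $\AA^j_X$ et par les traces $\trEt$, $\trCH$ et $\trM$ (cf. la Définition~\ref{defi:susIso} et l'Équation~\ref{equa:CHsingMot}). L'égalité se ramène alors à ${\PhiR}_{,X} = \trCH\, {\PhiR}_{,\AA^j_X}\, \trEt$, dont le point non formel est que la réalisation étale de $\trM$ est la trace $\trEt$ de \cite{SGA43} sous les identifications (\ref{equa:realMotiCompSupp}) et (\ref{equa:realTwist}); c'est le contenu du Lemme~\ref{lemm:realTrace}, qui demande un calcul explicite avec $\PP^1$ et $\AA^1$ et ne découle pas formellement des définitions. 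Votre proposition omet entièrement ce pas.
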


%Cette proposition fournit le corollaire suivant:

%\begin{coro*}[{Corollaire~\ref{coro:BL}}]
%Quand $X$ est lisse, l'isomorphisme de Suslin $\PhiS$ est égal au morphisme
%\[ H_{\mathcal{M}}^{2(j + d) - n}(X, \Lambda(j + d)) \to H_{et}^{2(j + d) - n}(X, \Lambda(j + d))  \]
%du conjecture de Beilinson-Lichtenbaum sous les identifications canoniques fournis par la dualité.
%\end{coro*}

\begin{paragraph}{Conventions.}
Pour un groupe abélien $A$, les éléments de $A^\#$ seront induits en général par un morphisme $A \to \zom$ et un choix d'injection $\zom \subset \QQ / \ZZ$. Donc, on choisit une fois pour tout une telle injection.
\end{paragraph}

\section{L'isomorphisme de Suslin}

%Dans cette section on rappelle les matières de \cite{Sus00} et on observe que le théorème principal de \cite{Kel12} implique qu'il est vrai sans supposer que la résolution des singularitiés est vraie.

%\subsection{Les groupes de Chow supérieurs et l'homologie singulière : le cas affine}
\subsection{Les groupes de Chow supérieurs et l'homologie singulière}

Dans cette sous-section on rappelle la notation, le résultat principal de la Section 2 de \cite{Sus00}, et son extension au cas non-affine pour que le lecteur constate que l'hypothèse de le résolution des singularités n'est pas nécessaire si l'on remplace \cite[Theorem 4.1.2]{Voev00} avec \cite[Theorem 5.1.3]{Kel12} (reproduit en dessous).
% Le résultat principa principal c'est que $H\singh_n(z_i X) \cong CH^i(X, n)$, le cas affine}

%Pour un schéma $S$, on définit $\Delta^n_S$ comme le sous-schéma linéaire de $\AA^{n + 1}_S$ défini par $t_0 + \dots + t_n = 1$ (où les $t_i$ sont des coordinées de $\AA^{n + 1}_S$). Un morphisme $\sigma: \{0, 1, \dots, m\} \to \{0, 1, \dots, n\}$ induit un $S$-morphisme $\AA^{m + 1}_S \to \AA^{n + 1}_S$ via $t_i \mapsto \sum_{j \in \phi^{-1}(i)} t_j$ (si $\phi^{-1}(i) = \varnothing$ alors $t_i \mapsto 0$).%
%\footnote{Ce morphisme admet la description plus facile suivante. Il est linéaire en les $t_i$ et il envoie le sommet $v_i = (0, \dots, 0, \underset{i}{1}, 0, \dots, 0)$ vers le sommet $v_{\sigma i}$.} %
%Cela induit un morphisme $\Delta_S^m \to \Delta_S^n$. De cette façon, $\Delta^\ast_S$ définit un foncteur covariant de la catégorie des ensembles totalement ordonnés vers la catégorie des schémas. C'est-à-dire, $\Delta^\ast_S$ est un schéma cosimplicial. Les morphismes $\Delta^m_S \to \Delta^n_S$ qui correspondent aux monomorphismes $\{0, \dots, m\} \to \{0, \dots, n\}$ s'appellent les cofaces (de codimension $n - m$).
%
Soit $k$ un corps et soit $X$ un $k$-schéma séparé équidimensionnel de type fini. Soit $z^i(X, n)$ le groupe libre abélien engendré par les sous-variétés de $\Delta_X^n$ de codimension $i$ tels que l'intersection avec chaque coface $\Delta^m_X \subset \Delta^n_X$ est de codimension $\geq i$ dans $\Delta^m_X$ (où $\Delta^n_S$ est le sous-schéma linéaire de $\AA^{n + 1}_S$ défini par $t_0 + \dots + t_n = 1$ et les cofaces sont les sous-variéties définies par $t_{j_1} = 0, \dots, t_{j_{n - m}} = 0$). 
%Pour une génératrice $[V]$ of $z^i(X, n)$ est un $\sigma: \{0, 1, \dots, m\} \to \{0, 1, \dots, n\}$ on peut définir sa préimage $\sigma^*([V]) \in z^i(X, m)$ avec le formule de $Tor$. D'où, les $z^i(X, \ast)$ deviennent un groupe abélien cosimplicial. Par définition, les groupes $CH^i(X, n)$ sont les groupes d'homotopie de ce groupe abélien simplicial \cite{Blo86}. C'est-à-dire, $CH^i(X, n)$ est le $n$-ième groupe d'homologie du complexe
Par définition \cite{Blo86}, les groupes $CH^i(X, n)$ sont les groupes d'homologie du complexe
$ \dots \stackrel{}{\to} z^i(X, 2) \stackrel{}{\to} z^i(X, 1) \stackrel{}{\to} z^i(X, 0) \to 0 \to \dots  $
où les différentielles sont $\sum_{j = 0}^n (-1)^j \partial^*_j$ %: z^i(X, n) \to z^i(X, n - 1)$ %
 et le morphisme $\partial_j^*$ est l'intersection avec la coface $t_j = 0$.
Le complexe $z^i(X, \ast)$ possède un sous-complexe $z^i_{equi}(X, \ast)$ où $z^i_{equi}(X, n)$ est le sous-groupe libre %
%Au lieu de considérer $z^i(X, n)$ comme on vient de discuter, on pourrait considérer le groupe abélien libre $z^i_{equi}(X, n)$
engendré par les sous-variétés $V \subset \Delta^n_X$ tels que la projection vers $\Delta_k^n$ est un morphisme équidimensionel%
\footnote{Quand on écrit équidimensionnel de dimension relative $t$ on veut dire un morphisme de schémas $f: W \to V$ qui est de type fini, dominant, est pour lequel le fonction $\dim p^{-1} p(-)$ sur $W$ est constant et égal à $t$. } %
de dimension relative $d - i$. %Dans \cite[Section 2]{Sus00} Suslin montre que chaque $z^i\equi(X, n)$ est un sous-groupe de $z^i(X, n)$.
Le théorème technique principal de \cite{Sus00} est le suivant. Il est vrai sans aucune hypothèse sur le corps $k$!

\begin{theo}[{\cite[Theorem 2.1]{Sus00}}] \label{theo:affineMoving}
Soit $X$ un $k$-schéma affine équidimensionnel et $i \leq d = \dim X$. Alors, l'inclusion de complexes $z\equi^i(X, \ast) \hookrightarrow z^i(X, \ast)$ est un quasi-isomorphisme.
\end{theo}

%\subsection{Les groupes de Chow supérieurs et l'homologie singulière : le cas non-affine}

Par $\Sch / k$ on veut dire la catégorie des $k$-schémas séparés de type fini et par $\Sm / k$ la sous-catégorie pleine des $k$-schémas lisses. Soit $X \in \Sch / k$ et soit $t \geq 0$. Suslin utilise la notation $z_t(X)$ pour le préfaiscau%
\footnote{Pour quelque détails sur ce préfaisceau, la catégorie $\Cor / k$ des correspondances (non-lisse), et préfaisceaux avec transferts voit la Section~\ref{sect:cyclesRelatifs}.} %
$z\equi(X, t)$ de \cite{SV}. 

Les groupes de (co)homologie singulière d'un préfaisceau $F$ sur $\Sch / k$ sont définis comme $H\singh_n(F) = H_n(C_\ast F)$ (où $C_\ast F$ est le complexe avec $C_nF = F(\Delta^n_k)$) et pour un groupe abélien $\Lambda$ comme
\[ H\singh_n(F, \Lambda) = H_n(C_\ast F \stackrel{L}{\otimes} \Lambda), \quad H\singc^n(F, \Lambda) = H^n(\RHom(C_\ast F, \Lambda)). \]
%Pour un préfaisceau $F$ sur $\Sch / k$, le complexe singulier associé (défini dans \cite{SV96}) est dénoté par $C_\ast F$. Les termes de ce complexe sont $C_n F = F(\Delta_k^n)$ et ses différentielles sont induites par les morphismes de coface comme dans la définition de $z^i(X, \ast)$ au-dessus. En particulier, $z\equi^i(X, \ast) = C_\ast z_i(X)$.

On remplace le théorème \cite[Theorem 4.1.2]{Voev00} de Voevodsky (qui apparait comme \cite[Theorem 3.1]{Sus00} dans l'article de Suslin) avec la version suivante qui ne suppose pas la résolution des singularités.

%\cite[Proposition 3.3.14]{SV00} and says that $\otimes \QQ$ before or after gives the same sheaves $z_0$. Probably some other result to say that only need to invert the characteristics of the base. INCLUDE THIS ? PUT A MARGIN NOTE FOR ALL INCLUSION QUERIES 

\begin{theo}[{\cite[Theorem 5.1.3]{Kel12}, cf. \cite[Theorem 4.1.2]{Voev00}}] \label{theo:kel12}
Soit $k$ un corps parfait de caractéristique exponentielle $p$. Soit $F$ un préfaisceau de $\zpi$-modules avec transferts tels que%
\footnote{On renvoie le lecteur qui \mbox{ne connait pas la cdh topologie au très accessible \cite[Section 5]{SV00}}.} %
$F_{cdh} = 0$. Alors, l'image de\footnote{Le complexe de préfaisceaux $\underline{C}_\ast F$ est la version préfaisceautique de $C_\ast F$ défini par \mbox{$\underline{C}_n F(Y) = F( \Delta^n_Y)$} (en fait, $C_\ast F$ n'est que le complexe des sections globales $(\underline{C}_\ast F)(k)$).} $\underline{C}_\ast F$ dans la catégorie des complexes de faisceaux de Nisnevich sur $\Sm / k$ est acyclique.
\end{theo}

Ce théorème implique tout de suite l'extension suivante.

\begin{theo}[{cf. \cite[Theorem 3.2]{Sus00}}]
Soit $k$ un corps parfait de caractéristique exponentielle $p$. Alors, pour tout $k$-schéma quasi-projectif équidimensionnel $X$ et tout $i \leq d = \dim X$, l'inclusion canonique de complexes 
\[ C_\ast(z_{d - i}(X))\opi = z\equi^i(X, \ast)\opi \to z^i(X, \ast)\opi \]
est un quasi-isomorphisme. Par conséquence, cette inclusion induit des isomorphismes
\begin{equation} \label{equa:CHsing}
\isoCHsing: H_n\singh(z_{d - i}(X), \ZZ / m) \stackrel{\sim}{\to} CH^{i}(X, n; \ZZ / m)
\end{equation}
pour tout $n \in \ZZ$ et tout $m$ premier à $p$.
\end{theo}

\begin{proof}
On rappelle la démonstration de \cite[Theorem 3.2]{Sus00} qui marche sans problème une fois que \cite[Theorem 3.1]{Sus00} (i.e., \cite[Theorem 4.1.2]{Voev00}) est remplacé par Théorème~\ref{theo:kel12}.

On travaille par récurrence sur $d = \dim X$. Si $d = 0$ les deux complexes sont égaux, donc considérons le cas $d > 0$ et assumons que le résultat est connu pour les variétés de dimension inférieure à $d$. Soit $Y \subset X$ un diviseur de Cartier effectif tel que $U = X - Y$ est affine. La suite de préfaisceaux
$ 0 \to z_{d - i}(Y) \to z_{d - i}(X) \to z_{d - i}(U) $
est exacte, et la cdh faisceau associé à $z_{d - i}(U) / z_{d - i}(X)$ est zéro (\cite[Theorem 4.2.9, Theorem 4.3.1]{SV00}). D'où, d'après le Théorème~\ref{theo:kel12}, l'application de $(\underline{C}_\ast(-))_{\textit{Nis}}(k) = C_\ast(-)$ résulte en un triangle distingué dans la catégorie dérivée des groupes abéliens. De l'autre côté, la suite $z^{i - 1}(Y, \ast) \to z^i(X, \ast) \to z^i(U, \ast)$ est aussi un triangle distingué (\cite[Theorem 3.3]{Blo86}). L'inclusion à gauche $C_\ast  z_{d - i}(Y)\opi \subseteq z^{i - 1}(Y, \ast)\opi$ est un quasi-isomorphisme par l'hypothèse de récurrence et l'inclusion à droite $C_\ast z_{d - i}(U)\opi \subseteq z^i(U, \ast)\opi$ est un quasi-isomorphisme par le Théorème~\ref{theo:affineMoving}. Donc l'inclusion au milieu $C_\ast z_{d - i}(X)\opi \subseteq z^i(X, \ast)\opi$ est aussi un quasi-isomorphisme.
\end{proof}

\subsection{La cohomologie étale, la cohomologie singulière, et les groupes de Chow supérieurs}

Désormais on fixe un $m$ premier à $p$ et on écrit $\Lambda = \ZZ / m$.

Dans cette sous-section on rappelle les matières de \cite[Section 4]{Sus00} et \cite{SV96} pour arriver à l'isomorphisme $CH^{d + j}(X, n; \Lambda)^\# \cong H_c^{n - 2j}(X, \Lambda(-j))$. Il sera la composition d'une suite d'isomorphismes.
%qui donne les isomorphismes $H^n\singc(X, \Lambda) \cong H_c^n(X, \Lambda)$ (pour $m$ premier à $p$). C'est une composition de deux isomorphismes
%\[ H_{sing}^n(z_{0}(X), \Lambda) \stackrel{\isoB}{\cong} Ext^n\qfh(z_{0}(X), \Lambda) \stackrel{\isoC}{\cong} H^n_c(X, \Lambda). \]

En regardant les définitions, on voit qu'il y a un accouplement canonique entre $H^n\singc(-, \Lambda)$ et $H_n\singh(-, \Lambda)$. Soit
\begin{equation} \label{equa:duas}
\duas: H^n\singc(-, \Lambda) \to H_n\singh(-, \Lambda)^\#
\end{equation}
le morphisme induit. Pour passer au cas $j > 0$ on utilisera les isomorphismes canoniques
\begin{align} \label{equa:trCH}
\trCH:   CH^{d + j}(X, n; \Lambda) &\stackrel{\sim}{\to} CH^{d + j} (\AA^j_X, n; \Lambda) \\
\trEt: \qquad H_c^{n}(\AA^j_X, \Lambda) &\stackrel{\sim}{\to} H_c^{n - 2j}(X, \Lambda(-j)) \label{equa:trEt}
\end{align}
(le choix de la notation $\trCH$ sera justifié plus tard) de \cite[Theorem 2.1]{Blo86} et \cite[XVIII.2.8.1]{SGA43} respectivement.

Choisissons une immersion ouverte $j: X \to \overline{X}$ de $X$ dans un schéma $\overline{X}$ projectif sur $k$. Soit $i: Y \to \overline{X}$ l'immersion fermée complémentaire. On choisit aussi une résolution injective $\Lambda \to \cI^\ast$ dans%
\footnote{La $h$-topologie (sur $\Sch / k$) est engendrée par la topology de Zariski et les morphismes propres surjectifs. On rencontrera aussi la $\tqfh$-topology (sur $\Sch / k$) qui est engendrée par la topologie de Zariski, et les morphismes finis surjectifs. On utilisera la même symbole pour un groupe abélien et son faisceau constant associé (le Zariski faisceau associé est déjà un $h$ faisceau).
} %
$\Shv_h(\Cor / k, \Lambda)$.%
\footnote{Le choix de $\ZZ / m$-coéfficients pour $\cI^\ast$ n'est pas nécessaire maintenant, mais ça rendra les choses plus facile plus tard cf. la Remarque~\ref{rema:sectionCanonique}.} %
Définissons le complexe%
\footnote{On utilisera souvent la même notation pour un objet, et le préfaisceau qu'il represente.}
\begin{equation} \label{equa:Xcdef}
[X^c] = \biggl ( [Y] \to [\overline{X}] \biggr )
\end{equation}
dans $\Comp(\PreShv(\Cor / k))$ concentré en degrés $-1$ et $0$.

\begin{theo}[Suslin-Voevodsky] \label{theo:premierChapeau}
Le complexe $\cI^\ast$ est acyclique en tant que complexe de faisceaux étales, et aussi en tant que complexe de $\tqfh$-faisceaux. En plus, les deux morphismes
%\[ \stackrel{= Ext^n_\tqfh(\underline{C}_\ast z_jX, \Lambda)}{\hom \bigl (\underline{C}_\ast z_jX, \cI^\ast[n] \bigr )}
%\stackrel{}{\to}
%\stackrel{= H^n\singc(z_0X, \Lambda)}{\hom_{K(\Ab)}(C_\ast z_0X, \cI^\ast(k)[n]))} \]
\[ \underset{= Ext^n_\tqfh(\underline{C}_\ast z_jX, \Lambda)}{\hom \bigl (\underline{C}_\ast z_jX, \cI^\ast[n] \bigr )}
\stackrel{}{\to}
\underset{= H^n\singc(z_0X, \Lambda)}{\hom_{K(\Ab)}(C_\ast z_0X, \cI^\ast(k)[n]))} \]
\[ \underset{= Ext^n_\tqfh(\underline{C}_\ast z_0X, \Lambda)}{\hom \bigl (\underline{C}_\ast z_0X, \cI^\ast[n] \bigr )}
\to
\underset{= H_c^n(X, \Lambda)}{ \hom([X^c], \cI^\ast[n])} \]
sont des isomorphismes, où le premier est $\Gamma(k, -)$,  le deuxième est la composition avec le morphisme canonique $[X^c] = ([Y] {\to} [\overline{X}]) \to (z_0 Y {\to} z_0 \overline{X}) \to z_0(X) \to \underline{C}_\ast z_0X$. Les trois hom-groupes sont dans $K(\PreShv(\Cor / k))$.
\end{theo}

\begin{coro} \label{coro:lambdaCalcul}
Le morphisme canonique $\Lambda \to \cI^\ast(\Delta^\ast)$ est un quasi-isomorphisme.
\end{coro}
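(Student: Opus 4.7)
Le plan est de calculer la cohomologie du complexe total $\cI^\ast(\Delta^\ast)$, vu comme bicomplexe $(p, q) \mapsto \cI^p(\Delta^q_k)$, au moyen de la suite spectrale obtenue en filtrant selon la direction de $\cI$. L'idée directrice est que chaque ligne du bicomplexe devrait être quasi-isomorphe à $\Lambda$ concentré en degré $0$, de sorte que la totalisation ne laisse survivre que $H^0 = \Lambda$.

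Première étape : calculer chaque ligne $\cI^\ast(\Delta^n_k)$. Puisque $\cI^\ast$ est une résolution injective de $\Lambda$ dans $\Shv_h(\Cor/k, \Lambda)$, on dispose de
\[
\cI^\ast(\Delta^n_k) = R\Gamma_h(\Delta^n_k, \cI^\ast) \simeq R\Gamma_h(\Delta^n_k, \Lambda),
\]
où la seconde identification utilise l'acyclicité de $\cI^\ast$ donnée par le Théorème~\ref{theo:premierChapeau}. Comme $\Delta^n_k \cong \AA^n_k$ est lisse sur $k$ et que $m$ est inversible dans $k$, la comparaison standard (Suslin--Voevodsky) entre $h$-cohomologie et cohomologie étale, suivie de l'invariance $\AA^1$-homotopique, ramènent ceci à $R\Gamma_{et}(k, \Lambda) = \Lambda$ concentré en degré $0$ (car $k$ est algébriquement clos dans les hypothèses en vigueur).

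Seconde étape : examiner la suite spectrale du bicomplexe. Son $E_1^{q, p}$ vaut $\Lambda$ si $p = 0$ et s'annule sinon. La différentielle $d_1 = \sum_i (-1)^i \partial_i^\ast$, induite par la structure cosimpliciale sur $\Delta^\ast$, alterne entre identité et zéro sur la ligne constante $E_1^{\ast, 0} = \Lambda$ ; donc $E_2$ se concentre en bidegré $(0, 0)$ avec valeur $\Lambda$, et le morphisme canonique $\Lambda \to \cI^\ast(\Delta^\ast)$ réalise visiblement cet isomorphisme sur $H^0$.

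Le point le moins immédiat est la comparaison $R\Gamma_h(Y, \Lambda) \simeq R\Gamma_{et}(Y, \Lambda)$ pour $Y$ lisse avec $m$ inversible dans $k$ ; elle est standard (Suslin--Voevodsky) et justifie \emph{a posteriori} le choix d'une résolution dans les $h$-faisceaux plutôt que dans les étales.
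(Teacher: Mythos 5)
Votre démonstration est correcte et suit essentiellement la même ligne que celle du papier, qui se réduit à la phrase « pour le corollaire, on utilise l'invariance par homotopie de la cohomologie étale » : les assertions d'acyclicité du Théorème~\ref{theo:premierChapeau} identifient chaque $\cI^\ast(\Delta^n)$ à $R\Gamma_{et}(\Delta^n, \Lambda) \simeq \Lambda$ (où l'on utilise que $k$ est séparablement clos), puis l'argument cosimplicial standard conclut. Vous ne faites qu'expliciter, via la suite spectrale du bicomplexe et la comparaison $h$/étale, ce que le papier laisse implicite.
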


\begin{rema}
Les assertions d'acyclicité sont \cite[Corollary 10.7]{SV96} et \cite[Corollary 10.9]{SV96}. L'assertion que l'isomorphisme induit par $\Gamma(k, -)$ est un isomorphisme est \cite[Theorem 7.6]{SV96}, qui dit aussi que la composition avec $z_0(X) \to \underline{C}_\ast z_0X$ induit un isomorphisme. La composition avec $(z_0Y {\to} z_0\overline{X}) \to z_0X$ induit un isomorphisme puisque ce dernier est un quasi-isomorphisme après la $h$-faisceautisation \cite[Theorem 4.2.9, Theorem 4.3.1]{SV00}. La composition avec $[X^c] = ([Y] {\to} [\overline{X}]) \to (z_0Y {\to} z_0\overline{X})$ induit un isomorphisme parce que pour $W$ propre $[W] \to z_0W$ est un isomorphisme aprés la $h$-faisceautisation \cite[Proposition 4.2.14]{SV00} ($c\equi(W / S, 0) = z\equi(W / S, 0)$ quand $W \to S$ est propre).

Pour le corollaire, on utilise l'invariance par homotopie de la cohomologie étale.%\marginpar{*** WHY CONVERGES ? ***}
\end{rema}

Donnons un nom à l'isomorphisme du Théorème~\ref{theo:premierChapeau}:
\begin{equation} \label{equa:isoCHsingcoh}
\isoCHsingcoh: H^n\singc(z_0X, \Lambda) \stackrel{\sim}{\leftarrow} \stackrel{\sim}{\to} H_c^n(X, \Lambda) 
\end{equation}

\begin{defi}[{}] \label{defi:susIso}
L'isomorphisme $\PhiS$ de Suslin est la composition
\[ \xymatrix@R=12pt{
H_c^{n - 2j}(X, \Lambda(-j))
\ar@{}[d]|{\viso} \ar@{}@<0.5ex>[d]^{\trEt} && \\
H_c^{n}(\AA^j_X, \Lambda) 
\ar@{}[r]|\cong \ar@{}@<0.5ex>[r]^{\isoCHsingcoh} & 
H^n\singc(z_0(\AA^j_X), \Lambda) 
\ar@{}[d]|{\viso} \ar@{}@<0.5ex>[d]^{\duas} && \\ &
H_n\singh(z_0(\AA^j_X), \Lambda)^\# 
\ar@{}[r]|\cong \ar@{}@<0.5ex>[r]^{\isoCHsing} & 
CH^{d + j}(\AA^j_X, n; \Lambda)^\#
\ar@{}[d]|{\viso} \ar@{}@<0.5ex>[d]^{\duas} \\ && 
CH^{d + j}(X, n; \Lambda)^\#.
} \]
%\begin{alignat*}{4}
%&H_c^{n - 2j}(X, \Lambda(-j))  \\ %\underset{\leftarrow}
%&{\stackrel{\trEt}{\cong}} 
%H_c^{n}(\AA^j_X, \Lambda) %\underset{\leftarrow}
%&{\stackrel{\isoCHsingcoh}{\cong}}
%%Ext\qfh^n(z_0(\AA^j_X)\qfh, \Lambda) %\underset{\leftarrow\rightarrow\leftarrow}
%%&&{\stackrel{\isoB}{\cong}}
%H^n\singc(z_0(\AA^j_X), \Lambda) \\ %\underset{\rightarrow}
%&&{\stackrel{\duas}{\cong}}
%H_n\singh(z_0(\AA^j_X), \Lambda)^\# %\underset{\leftarrow}
%&{\stackrel{\isoCHsing}{\cong}}
%CH^{d + j}(\AA^j_X, n; \Lambda)^\# \\%\underset{\rightarrow}
%&&&{\stackrel{\trCH}{\cong}}
%CH^{d + j}(X, n; \Lambda)^\#.
%\end{alignat*}
où le morphisme $\trEt$ (resp. $\isoCHsingcoh$, $\duas$, $\isoCHsing$, $\trCH$) est de l'Equation~\ref{equa:trEt} (resp. \ref{equa:isoCHsingcoh}, \ref{equa:duas}, \ref{equa:CHsing}, \ref{equa:trCH}).
\end{defi}

%% THIS IS PROBABLY GOING NOWHERE. CF THE TRUNCATION IN BEILINSON-LICHTENBAUM
%\begin{rema}
%Le lecteur remarquera que le seul isomorphisme qui n'est pas défini pour $j < 0$ est $\isoCHsingcoh$. En fait, on peut étendre le morphisme de Suslin à $j < 0$ comme suit. Il y a une inclusion canonique $Cor([p], -): \uhom(\PP^n, z_0(X)) \to z_rX$ où $p: \PP^r_k \to k$ est la projection. Puis, l'isomorphisme canonique $H^{2r}(\PP^r, \mu_m^r) \cong \Lambda$ induit un morphisme $\PP^r \to \cI^\ast \otimes \mu_m^{-r}[-2r]$ dans la catégorie $\Comp(\PreShv(\Cor / k))$. Enfin, il y a un isomorphisme évident $\uhom(\mu_m^{-r}[-2r], -) \cong - \otimes \mu_m^{r}[2r]$ et encore le morphisme canonique $[X^c] \to z_0X$. La composition
%\[ [X^c] \otimes \mu_m^{r}[2r] \to z_0 X \otimes \mu_m^{r}[2r] \cong \uhom(\mu_m^{-r}[-2r], z_0X) \to \uhom(\PP^r, z_0 X) \to z_rX \]
%de tous ces morphismes fournit un morphisme
%\[ \underset{=Ext_{qfh}^n(z_rX, \Lambda)}{\hom(z_rX, \cI^\ast[n])} \to \underset{H_c^{n - 2r}(X, \Lambda(-r))}{([X^c] \otimes \mu_m^{r}[2r], \cI^\ast[n])} \]
%de groupes d'hom dans $K(\PreShv(\Cor / k))$.
%
%*** TO REWRITE ***
%\end{rema}

\section{Le morphisme induit par la réalisation étale}

Dans cette section on discute le morphisme $\PhiR$.

%\subsection{Les groupes de Chow supérieurs, l'homologie motivique de Borel-Moore, et l'homologie étale de Borel-Moore}

%On considère dans cette sous-section l'isomorphisme (\ref{equa:CHsingMot}) de \cite[Proposition 4.2.9]{Voev00} entre les groupes de Chow supérieurs et l'homologie motivique de Borel-Moore.

Rappelons-nous qu'il y a une identification canonique de $H\singh_n(z_0X, \Lambda)$ avec le group $\hom(\ZZ [n], \underline{C}_\ast z_0 X \otimes \Lambda)$ de morphismes dans $K(\PreShv(\Cor / k))$. Par définition, $M(k)$ (resp. $M^c(X) \otimes \Lambda$) est l'image de $\ZZ$ (resp. $\underline{C}_\ast z_0 X \otimes \Lambda$) sous la projection canonique $K(\PreShv(\Cor / k)) \to DM\eff(k)$. Donc il y a un morphisme induit.
\begin{equation} \label{equa:isosingmot}
\isosingmot: H\singh_n(z_0X, \Lambda) \stackrel{\sim}{\to} \hom_{DM\eff(k)}(M(k), M^c(X)[n] \otimes \Lambda).
\end{equation}
C'est un isomorphisme après \cite[Theorem 8.1]{FV} (ou \cite[Theorem 5.4.19]{Kel12} pour la version sans supposer la résolution des singularités). Puis, il y a un isomorphisme canonique 
\begin{equation} \label{equa:mTr}
\Tr^{M}: M^c(\AA_X^j) \stackrel{\sim}{\to} M^c(X)(j)[2j]
\end{equation}
de \cite[Corollary 4.1.8]{Voev00} (où \cite[Theorem 5.5.9]{Kel12} pour la version sans supposer la résolution des singularités). Pour $j \geq 0$, l'isomorphisme $\isoCHmot$ de \cite[Proposition 4.2.9]{Voev00} est par définition la composition
\begin{equation} \label{equa:CHsingMot}
\begin{split}
\isoCHmot:  CH^{d + j}(X, n, \Lambda) &\underset{\trCH}{\stackrel{\sim}{\rightarrow}} 
CH^{d + j}(\AA^j_X, n, \Lambda) \\ &\underset{\isoCHsing}{\stackrel{\sim}{\leftarrow}}
H\singh_n(\AA^j_X, \Lambda) \\ & \underset{\isosingmot}{\stackrel{\sim}{\rightarrow}}
\hom_{DM\eff(k, \zpi)}(M(k), M^c(\AA^j_X)[n] \otimes \Lambda) \\ &\underset{\trM}{\stackrel{\sim}{\rightarrow}} 
\hom_{DM\eff(k, \zpi)}(M(k), M^c(X)(j)[2j + n] \otimes \Lambda).
\end{split}
\end{equation}
Si $j = 0$, le morphisme $\trCH$ et $\trM$ sont des égalités, d'où $\isoCHmot = \isosingmot \isoCHsing$.

%On donne maintenant une description de ces motifs qui est utile pour calculer leur réalisations et pour faire la comparaison avec l'isomorphisme de Suslin. %
On continue avec les $j: X \to \overline{X}$ et $i: Y \to \overline{X}$ choisis au-dessus. 
\begin{defi}
On rappelle qu'on avait défini dans (\ref{equa:Xcdef}) le complexe $[X^c]$ de $\Comp(\PreShv(\Sch / k))$. On définit aussi (les degrés sont indiqués en bas) 
\[ [X^c / m] = (\underset{-2}{[Y]} \stackrel{i - m \cdot id_Y}{\longrightarrow} \underset{-1}{[\overline{X}] \oplus [Y]} \stackrel{m \cdot id_{\overline{X}} + i}{\longrightarrow} \underset{0}{[\overline{X}]} ). \]
% Ces définitions viennent avec des morphismes canoniques\footnote{Le premier n'est que l'inclusion et le deuxième est la composition
%\[ ([Y] \to [\overline{X}]) \to (z_0 Y \to z_0 \overline{X}) \to z_0 X. \]}
%\begin{equation} \label{equa:canoXs}
%[X^c] \to [X^c / m], \qquad \textrm{ et } \qquad  [X^c] \to z_0 X.
%\end{equation}
\end{defi}

\begin{rema} \label{rema:sectionCanonique}
Constatons que si $F^\ast$ est un complexe de préfaisceau de $\Lambda$-modules avec transferts, alors%
\footnote{Pour un complexe de préfaisceau $F^\ast$ et un complexe d'objets $W^\ast$, par $F^\ast(W^\ast)$ en veut dire $\Tot(F^q(W^p))$.}
 $F^\ast([X^c / m])$ devient égal à $F^\ast([X^c]) \oplus F^\ast([X^c])[1]$. D'où le morphisme canonique admet une section canonique
\begin{equation} \label{equa:section}
\sigma: F^\ast([X^c]) \to F^\ast([X^c / m])
\end{equation}
\end{rema}

%On utilisera le fait que le morphisme induit $[X^c] \otimes \Lambda \to [X^c / m] \otimes \Lambda$ admet une section canonique $[X^c / m] \otimes \Lambda \to [X^c] \otimes \Lambda$ (toujours dans la catégorie $\Comp(\PreShv(\Sch / k)))$ d'où il y a un morphisme canonique 
%\[ \sigma: [X^c / m] \to z_0X \otimes \Lambda \]
%où $- \otimes \Lambda$ est appliqué terme-à-terme et schéma-à-schéma.

\begin{theo} \label{theo:corMotIso}
Le morphisme évident
\[ \hom([\Delta^\ast][n], [X^c / m]) \stackrel{\sim}{\to} \hom([\Delta^\ast][n], z_0 X \otimes \Lambda[n] ) \]
%\hom_{DM\eff(k)}(M(k), M^c(X)[n] \otimes \Lambda) \]
est un isomorphisme.
\end{theo}

\begin{proof}
Vu l'isomorphisme (\ref{equa:isosingmot}) il suffit de montrer que $\hom([\Delta^\ast][n], [X^c / m]) \to \hom(\ZZ, M^c(X)\otimes \Lambda [n])$ est un isomorphisme. Pour $X$ propre, le morphisme $\hom([\Delta^\ast][n], [X]) \to \hom(M(k), M(X)[n])$ analogue est un isomorphisme par \cite[Theorem 8.1]{FV} (ou \cite[Theorem 5.4.19]{Kel12}). Ce cas implique le cas où $X$ n'est pas propre par le morphisme évident entre l'image du triangle distingué $[Y] \to [\overline{X}] \to [X^c] \to [Y][1]$ et le triangle distingué $M^c(Y) \to M^c(\overline{X}) \to M^c(X) \to M^c(Y)[1]$ (le dernier est distingué par \cite[Proposition 4.1.5]{Voev00} ou \cite[Proposition 5.5.5]{Kel12}). Puis le passage à $[X^c / m]$ et $M^c(X) \otimes \Lambda$ vient des triangles distingués évidents (dont le premier morphisme est multiplication par $m$).
\end{proof}

\begin{defi} \label{defi:phiIvo}
Définissons $\PhiR$ comme la composition
\begin{align*}
\hom_{D(k, \Lambda)}(\Lambda, R\pi_{X!} \Lambda_X(-j)[n-2j]) &\stackrel{\duaet}{\to} \hom_{D(k, \Lambda)}(R\pi_{X!} \Lambda_X(-j)[n-2j], \Lambda)^\# \\
&\stackrel{\RIvo}{\to} \hom_{DM\eff(k, \zpi)}(M(k), M^c(X)(j)[2j - n] \otimes \Lambda)^\# \\
&\stackrel{\isoCHmot}{\to} CH^{d + j}(X, n; \Lambda)^\#.
\end{align*}
où $\RIvo$ est le morphisme induit par $R_\Lambda$, les identifications canoniques (\ref{equa:realMotiCompSupp}) et (\ref{equa:realTwist}), et la section $\sigma: R\pi_{X!}\Lambda_X \to R_\Lambda(M^c(X) \otimes \Lambda)$ de l'Equation~\ref{equa:section}.
\end{defi}

%%% PUT THIS INSIDE THE PROOF
%On est maintenant en mesure de décriver le morphisme $\RIvo \circ \isosingmot$ de façon concrète.

%\begin{lemm} \label{lemm:secondChapeau}
%Le diagramme
%\[ \xymatrix{
%H_n\singh(z_0X, \Lambda) \ar[d]_{\isosingmot} \ar@{=}[r] & \hom_{K(\PreShv(\Cor / k))}([\Delta^\ast][n], z_0X \otimes \Lambda) \\
%\hom_{DM\eff(k)}(M(k)[n], M^c(X)\otimes \Lambda) \ar[d]_{\RIvo} & \ar[l]^-{\textrm{Lemme~\ref{lemm:corMotIso}}}_-\cong \hom_{K(\PreShv(\Cor / k))}([\Delta^\ast][n], [X^c / m]) \ar[u] \ar[d] \\
%\hom_{D(k, \Lambda)}(R\pi_{X!} \Lambda_X, \Lambda[n])  & \ar[l]_\cong^{\textrm{Lemme~\ref{lemm:realisation}}} \hom_{K(\Ab)}(\cI^\ast(X^c), \cI^\ast(\Delta^\ast)[-n])
%} \]
%est commutatif où le morphisme supérieur à droite est composition avec le morphisme canonique (\ref{equa:canoXs}), et le morphisme inférieur à droite est l'application de $\cI^\ast$ et composition avec la section $\sigma$ canonique (\ref{equa:section}).
%\end{lemm}

\section{Demonstration de la compatibilité}

Dans cette section on montre les compatibilités affirmées dans l'introduction.

Constatons tout d'abord que le foncteur $\underline{C}_\ast: \PreShv \to \Comp(\PreShv)$ s'étend à $\Comp(\PreShv)$ en posant $(\underline{C}_\ast F^\ast)^n = \prod_{p + q = n} F^q(\Delta^{-p} \times -)$. Si l'on choisit correctement le signe des différentielles%
\footnote{Par exemple, le choix suivant marche. Pour $q - p = n$ on choisit la $(p,q)$-terme du différentiel $d^{n - 1}: (\underline{C}_\ast F^\ast)^{n - 1} \to (\underline{C}_\ast F^\ast)^n$ d'être  $(-1)^p d^{q - 1}_{\Delta^{-p}} + \sum_{j = 0}^{1 - p} (-1)^jF^q(\partial_j)$ où \mbox{$\partial_j: \Delta^{-p} \to \Delta^{1 - p}$} sont les cofaces de $\Delta^\ast$ et $d^{q - 1}_Y: F^{q - 1}(Y) \to F^q(Y)$ les différentiels de $F^\ast$ à l'objet $Y$. Du point de vu du bicomplèxe, nous avons inversé les différentiels verticaux sur les colonnes impaires.} il y a une inclusion canonique
\[ \epsilon: F^\ast \to \underline{C}_\ast F^\ast \]
du complexe $F^\ast$ dans le bicomplexe $\underline{C}_\ast F^\ast$ comme le colonne $p = 0$, qui est naturelle en $F^\ast$. La composition de $\underline{C}_\ast$ avec soi-même est donc munie de deux transformations naturelles canoniques
\begin{equation} \label{equa:twoAug}
\underline{C}_\ast \epsilon,\   \epsilon_{\underline{C}_\ast}: \underline{C}_\ast \rightrightarrows \underline{C}_\ast \underline{C}_\ast
\end{equation}

\begin{lemm} \label{lemm:eilenbergZilber}
Soit $F$ un préfaisceau avec transferts. Alors, il existe une homotopie entre les deux inclusions
$\underline{C}_\ast F \rightrightarrows \underline{C}_\ast \underline{C}_\ast F$.
\end{lemm}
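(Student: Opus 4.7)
My plan is to identify the two natural transformations as the two ``axis'' embeddings into the bicomplex underlying $\underline{C}_\ast \underline{C}_\ast F$, and then to build an explicit chain homotopy between them by the classical prism/Eilenberg--Zilber construction.

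First I would unpack the two maps. For $F$ a presheaf placed in degree $0$, one has $(\underline{C}_\ast F)^{-n}(Y) = F(\Delta^n \times Y)$, and the totalization formula $(\underline{C}_\ast F^\ast)^n = \prod_{p+q=n} F^q(\Delta^{-p} \times -)$ iterated gives $(\underline{C}_\ast \underline{C}_\ast F)^{-n}(Y) = \prod_{p + q = n,\, p, q \geq 0} F(\Delta^p \times \Delta^q \times Y)$. Reading off the definition of $\epsilon$ in each case as ``inclusion as the column corresponding to $\Delta^0$'', one checks that $\epsilon_{\underline{C}_\ast F}$ sends $F(\Delta^n \times Y)$ isomorphically onto the summand at $(p,q)=(0,n)$ via $\Delta^0 \times \Delta^n \cong \Delta^n$, while $\underline{C}_\ast \epsilon_F$ sends it onto the summand at $(p,q)=(n,0)$ via $\Delta^n \times \Delta^0 \cong \Delta^n$. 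Thus the two natural transformations are precisely the two axis embeddings of $\underline{C}_\ast F$ into the totalized bicomplex.

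To construct the chain homotopy $h$, I would exploit the standard prism decomposition. For each pair $(p,q)$ with $p+q = n-1$, the triangulation of $\Delta^{p+q} \times \Delta^1$ into non-degenerate $(p+q+1)$-simplices indexed by shuffles yields, after using the transfer structure on $F$ to form the corresponding signed sum of correspondences $\Delta^p \times \Delta^q \to \Delta^n$, a natural component $h_{p,q} : F(\Delta^n \times Y) \to F(\Delta^p \times \Delta^q \times Y)$. Assembling these across all $(p,q)$ with $p+q = n-1$ defines $h : (\underline{C}_\ast F)^{-n} \to (\underline{C}_\ast \underline{C}_\ast F)^{-n+1}$, which is simply the Alexander--Whitney/shuffle homotopy applied to the bisimplicial presheaf $(p,q) \mapsto F(\Delta^p \times \Delta^q \times -)$.

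The verification of $dh + hd = \underline{C}_\ast \epsilon_F - \epsilon_{\underline{C}_\ast F}$ reduces to the combinatorial prism identity $\partial(\Delta^n \times \Delta^1) = (\Delta^n \times \{1\}) - (\Delta^n \times \{0\}) - (\partial \Delta^n \times \Delta^1)$ applied componentwise. The main obstacle, as I see it, is not conceptual but a matter of sign bookkeeping: the convention in the footnote (inverting the vertical differentials on odd columns and inserting a $(-1)^p$ in the bicomplex differential) must be reconciled with the classical prism signs, and this reconciliation requires careful attention even though each step is routine.
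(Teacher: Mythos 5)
Your identification of the two maps as the axis embeddings $(p,q)=(0,n)$ and $(p,q)=(n,0)$ of the bicomplex is correct and matches the paper's setup, but the homotopy you propose does not exist as described, and the obstacle is not sign bookkeeping. First, a degree problem: a homotopy between two degree-zero cochain maps must \emph{lower} cohomological degree, so its components must land in $\prod_{p+q=n+1}F(\Delta^p\times\Delta^q\times Y)$, induced by correspondences $\Delta^p\times\Delta^q\to\Delta^n$ with $p+q=n+1$; your $h$ goes to degree $-n+1$, i.e.\ it has the degree of a differential, so $dh+hd$ is a degree-two map and the identity $dh+hd=\underline{C}_\ast\epsilon_F-\epsilon_{\underline{C}_\ast F}$ cannot hold for degree reasons. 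Second, the correspondences are never actually produced: a shuffle gives a morphism $\Delta^{p+q}\to\Delta^p\times\Delta^q$ (the opposite direction to what you need), and the prism operator requires a specified homotopy $H$ with which one composes the simplices of the triangulation of $\Delta^m\times\Delta^1$ --- you never say what $H$ is.

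That missing $H$ is the heart of the matter. For a general bisimplicial abelian group whose row $p=0$ and column $q=0$ are both identified with a fixed simplicial abelian group, the two resulting inclusions into the total complex are \emph{not} chain homotopic (for $A_{p,q}=\ZZ[S_p\times S_q]$ with $S_0$ a point they already disagree on homology by K\"unneth), so no purely combinatorial Alexander--Whitney/shuffle argument can prove the lemma; some special property of $\Delta^\bullet$ must enter. The paper's proof makes this explicit: using $\underline{C}_\ast=\uhom([\Delta^\ast],-)$ it reduces to producing, once and for all, a homotopy between the two projections $[\Delta^\ast\times\Delta^\ast]\rightrightarrows[\Delta^\ast]$ in $\Comp(\PreShv(\Cor/k))$, and builds it degree by degree by obstruction theory (``acyclic models''), the obstructions vanishing because $H_nC_\ast[\Delta^m]=0$ for $n>0$, which rests on the $\AA^1$-contractibility of $\Delta^m$ and \cite[Lemma 7.4, Corollary 7.5]{SV96}. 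If you insist on an explicit formula rather than an inductive one, you would have to feed the explicit contraction $\AA^1\times\Delta^m\to\Delta^m$ (which is not affine-linear, so the resulting correspondences are not of simplicial-combinatorial origin) into a prism operator; either way the contractibility input, entirely absent from your proposal, is indispensable.
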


\begin{rema}
Cette propriété semble légèrement plus forte que le fait que les deux inclusions sont des quasi-isomorphismes.
Le style de la démonstration est bien connu en topologie algébrique sous le nom ``Acyclic models'' \cite[Ch. VI, \S12]{Dol72}.
\end{rema}

\begin{proof}
C'est dû au fait que les $[\Delta^m]$ sont $\AA^1$-contractiles. Ajoutons quelques détails. Puisque $\underline{C}_\ast(-) = \uhom([\Delta^\ast], -)$ (hom interne dans $\Comp(\PreShv(\Cor / k)))$ il suffit de montrer que la différence des deux projections canonique\footnote{Par projections canoniques on veut dire les morphismes définis par les projections $\oplus_{p + q = n} [\Delta^p \times \Delta^q] \to [\Delta^n]$ vers les termes $p = 0$ ou $q = 0$.} $pr_1, pr_2: [\Delta^\ast \times \Delta^\ast] \rightrightarrows [\Delta^\ast]$ est homotope à zéro.%
\footnote{Le complexe $[\Delta^\ast]$ (resp. $[\Delta^\ast \times \Delta^\ast]$) est celui induit par Yoneda et le schéma simplicial $\Delta^\ast$ (resp. bisimplicial $\Delta^\ast \times \Delta^\ast$).} % 
Puisque $pr_1$ et $pr_2$ sont égaux on degré zéro, on peut construire un homotopie par récurrence une fois qu'on sait que $H^n C_\ast\underline{C}_\ast [\Delta^n] = 0$ pour $n > 0$. Pour tout $F$, l'inclusion $C_\ast(\epsilon): C_\ast F \to C_\ast \underline{C}_\ast F$ est toujours un quasi-isomorphisme \cite[Corollary 7.5]{SV96}, et il nous suffit de voir que $H^n C_\ast [\Delta^n] = 0$. Cela est vrai pour n'importe quel schéma contractile\footnote{C'est-à-dire, pour les schèmas $\pi_Y : Y \to k$ tel qu'il existe un morphisme $\gamma: k \to Y$ et un morphisme $H: \AA^1_Y \to Y$ tel que $H i_0 = \id_Y$ et $H i_1 = \gamma \pi_Y$ où $i_0: Y \to \AA^1_Y$ est la section à zéro et $i_1: Y \to \AA^1_Y$ la section à 1.} pour les raisons classiques (\cite[Lemma 7.4]{SV96}), et c'est facile de voir que $\Delta^n$ est contractile.\footnote{Pour $H: \AA^1_{\Delta^n} \to \Delta^n$ on peut choisir, par exemple, le morphisme induit par $t_i \mapsto s t_i$ pour $i > 0$ et $t_0 \mapsto 1 - s + s t_0$ où les $t_i$ sont des coordonnées de $\Delta^n$ et $s$ une coordonnée de $\AA^1$.}%
%
%Soit $H: \AA^1_{\Delta^n} \to \Delta^n$ le morphisme induit par $t_i \mapsto s t_i$ pour $i > 0$ et $t_0 \mapsto 1 - s + s t_0$ où les $t_i$ sont des coordonnées de $\Delta^n$ et $s$ une coordonnée de $\AA^1$. En particulier, la composition avec la section à zéro $i_0: Y \to \AA^1_Y$ est l'identité $H i_0 = \id_{\Delta^n}$, et la composition avec la section à un $i_1: Y \to \AA^1_Y$ est un morphisme constant. D'où, le morphisme induit $C_\ast(H i_1)$ est homotope à l'identité $\id_{C_\ast [\Delta^n]} = C_\ast(H i_0)$. Mais puisque $H i_1$ est constant, les $\partial
\end{proof}

\begin{prop} \label{prop:pairing}
Les morphismes
\[ \begin{split}
\RIvo \circ \isosingmot: H_n\singh(X, \Lambda) &\to \hom(R\pi_{X!} \Lambda_X, \Lambda[n]) \textrm{ et } \\
\isoCHsingcoh: H^n\singc(X, \Lambda) &\cong \hom(\Lambda[n], R\pi_{X!} \Lambda_X)
\end{split} \]
sont compatibles avec les accouplements canoniques
\begin{align}
H_n\singh(X, \Lambda) &\times H^n\singc(X, \Lambda) \to \Lambda \textrm{ et } \tag{\ref{equa:singPair}} \\
\hom(\Lambda, R\pi_{X!} \Lambda_X) &\times \hom(R\pi_{X!} \Lambda_X, \Lambda) \to \Lambda. \tag{\ref{equa:etalPair}}
\end{align}
%Du coup, $\isosingmot \RIvo \duaet = \duas \isoC \isoB$.
\end{prop}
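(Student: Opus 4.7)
The plan is to trace both classes to representative morphisms in the homotopy category $K(\PreShv(\Cor/k, \Lambda))$, using the injective $\tqfh$-resolution $\Lambda \to \cI^\ast$ fixed before Theorem~\ref{theo:premierChapeau}, and then to identify both the étale pairing and the singular pairing with the same composition pairing in this common setting.

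First, I would use Theorem~\ref{theo:premierChapeau} to represent $\isoCHsingcoh(\beta)$ by a morphism $\widetilde\beta : \underline{C}_\ast z_0 X \to \cI^\ast[n]$ in $K(\PreShv(\Cor/k))$ whose image under $\Gamma(k,-)$ is the cocycle $\beta \in \hom_{K(\Ab)}(C_\ast z_0 X, \cI^\ast(k)[n])$ representing the given class in $H^n_{sing,c}(X, \Lambda)$, and whose precomposition with the augmentation $[X^c] \to \underline{C}_\ast z_0 X$ produces the étale representative in $\hom([X^c], \cI^\ast[n]) = H^n_c(X, \Lambda)$. Similarly, unwinding Definition~\ref{defi:phiIvo} together with Theorem~\ref{theo:corMotIso}, the class $\RIvo \isosingmot(\alpha)$ is induced by a morphism $\widetilde\alpha : [X^c/m] \to \underline{C}_\ast z_0 X \otimes \Lambda[n]$ lifting the cycle $\alpha : \ZZ[n] \to C_\ast z_0 X \otimes \Lambda$, precomposed with the canonical section $\sigma$ of Remark~\ref{rema:sectionCanonique}.

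The étale composition pairing $\isoCHsingcoh(\beta) \circ \RIvo \isosingmot(\alpha)$ then becomes, after applying $\hom(-, \cI^\ast)$, a self-map of $\cI^\ast$ obtained by pairing $\widetilde\beta$ with $\widetilde\alpha$ through the double Suslin complex $\underline{C}_\ast \underline{C}_\ast z_0 X$. On the singular side, the evaluation pairing $\langle \alpha, \beta \rangle \in \Lambda$ arises from the direct composition $\beta \circ \alpha : \ZZ[n] \to \cI^\ast(k)[n]$, which uses only a single $\underline{C}_\ast$ and is interpreted as an element of $H^0(\cI^\ast(k)) = \Lambda$ via Corollary~\ref{coro:lambdaCalcul}. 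The discrepancy between these two constructions corresponds exactly to the difference between the two canonical inclusions $\underline{C}_\ast \epsilon$ and $\epsilon_{\underline{C}_\ast}$ of \eqref{equa:twoAug}.

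The main obstacle, and the reason Lemma~\ref{lemm:eilenbergZilber} was proved, is precisely that these two inclusions are not equal as chain maps but only chain-homotopic. The homotopy supplied by that lemma is exactly what is needed to identify the étale composition with the singular evaluation at the level of cohomology. Once this is in hand, naturality of the identifications in Theorem~\ref{theo:premierChapeau} and functoriality of $\Gamma(k,-)$ yield the compatibility of the two pairings. The equality $\PhiS = \PhiR$ announced in the proposition then follows by inspection of Definitions~\ref{defi:susIso} and \ref{defi:phiIvo}, using the standard compatibility of étale realization with the trace isomorphisms $\trCH$ and $\trEt$.
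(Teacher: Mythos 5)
Votre proposition suit essentiellement la même démarche que la preuve du papier : représenter les deux classes au niveau des complexes via le Théorème~\ref{theo:premierChapeau} et l'isomorphisme (\ref{equa:isosingmot}), relever la classe d'homologie grâce au Théorème~\ref{theo:corMotIso} et à la section de la Remarque~\ref{rema:sectionCanonique}, identifier les deux accouplements à des compositions dans $K(\PreShv(\Cor / k))$ aboutissant dans $\underline{C}_\ast \cI^\ast$, et résoudre l'unique divergence --- les deux inclusions canoniques de (\ref{equa:twoAug}) --- par le Lemme~\ref{lemm:eilenbergZilber}. Une seule petite imprécision : le relèvement fourni par le Théorème~\ref{theo:corMotIso} est un morphisme $[\Delta^\ast][n] \to [X^c / m]$ (le $\beta^\dagger$ du papier), et non un morphisme $[X^c / m] \to \underline{C}_\ast z_0 X \otimes \Lambda[n]$ « relevant $\alpha$ » --- ce dernier est la factorisation canonique $\chi$, indépendante de $\alpha$.
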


\begin{rema}
On espère que le lecteur ne se perd pas dans les détails de la preuve. Tout simplement, il s'agit de représenter un élément de $H_n\singh(X, \Lambda)$ (resp. $H^n\singc(X, \Lambda)$) par un morphisme $\Delta^\ast[n] \stackrel{\alpha}{\to} \mathfrak{X}$ (resp. $\mathfrak{X} \stackrel{\phi}{\to} \cI^\ast[n]$) pour des interprétations de $\mathfrak{X}$ bien choisies dans une catégorie convenable, et constater que les deux accouplements ne sont que la composition. Les parties les plus difficiles sont de se frayer un chemin à travers tous les modèles de $\mathfrak{X}$, et d'avoir le Lemme~\ref{lemm:eilenbergZilber} sous la main.
\end{rema}

\begin{proof}
On utilise Théorème~\ref{theo:premierChapeau} et l'Equation~\ref{equa:isosingmot} pour représenter des éléments de $H_n\singh(X, \Lambda)$ et $H^n\singc(X, \Lambda)$ par des morphismes
%\[ \alpha: [\Delta^\ast][n] \to [X^c / m] \qquad \textrm{ et } \qquad \phi: \underline{C}_\ast z_0 X \to \cI^\ast[n] \]
%\[ [\Delta^\ast][n] \stackrel{\alpha}{\longrightarrow} [X^c / m] \qquad \textrm{ et } \qquad \underline{C}_\ast z_0 X \stackrel{\phi}{\longrightarrow} \cI^\ast[n] \]
\[ \ZZ[n] \stackrel{\alpha}{\longrightarrow} \underline{C}_\ast z_0X \otimes \Lambda \qquad \textrm{ et } \qquad \underline{C}_\ast z_0 X \stackrel{\phi}{\longrightarrow} \cI^\ast[n] \]
dans $K(\PreShv(\Cor / k))$. Comme on avait imposé à $\cI^\ast$ d'être $\Lambda$-linéaire, il y a une factorisation canonique $\underline{C}_\ast z_0 X \otimes \Lambda \stackrel{\psi}{\longrightarrow} \cI^\ast[n]$ et l'accouplement (\ref{equa:singPair}) n'est que l'image de la composition $\psi \alpha$ sous l'isomorphisme canonique $\Lambda \stackrel{\sim}{\to} \hom(\ZZ, \cI^\ast)$. Pour le comparaison on compose avec $\epsilon: \cI^\ast \to \underline{C}_\ast \cI^\ast$ et on définit
\[ (\alpha, \phi)\singc \stackrel{def}{=} \epsilon\, \psi\, \alpha \qquad \qquad \in \hom_{K(\PreShv(\Cor / k))}(\ZZ, \underline{C}_\ast \cI^\ast) \]

On construit maintenant le diagramme suivant.
\begin{equation} \label{equa:twoComp}
\xymatrix{
& [X^c] \ar[r] \ar[d] & z_0X \ar[r] \ar[d] & \underline{C}_\ast z_0 X \ar[r]^\phi \ar[d] & \cI^\ast \\
[\Delta^\ast] \ar[r]^-{\beta^\dagger} \ar@/_2em/[rr]_(0.2){\alpha^\dagger} & [X^c / m] \ar[r] \ar[urrr]|(0.32)\hole|(0.7)\hole_(0.85){\chi} & z_0 X \otimes \Lambda \ar[r] & \underline{C}_\ast z_0 X \otimes \Lambda \ar[ur]_{\psi} %\\
%\ZZ \ar@/_1em/[rrru]_(0.15){\alpha} 
}
\end{equation}
L'image de $\phi$ dans $H^n_c$ est la composition avec le morphisme canonique $[X^c] \to \underline{C}_\ast z_0X$ (Théorème~\ref{theo:premierChapeau}). Pour l'accouplement on a besoin de sa factorisation canonique $\chi$ (Remarque~\ref{rema:sectionCanonique}). Sous l'adjonction $(-)^\dagger: \hom([\Delta^\ast], -) \cong \hom(\ZZ, \underline{C}_\ast(-))$ l'élément $\alpha$ correspond à un morphisme $[\Delta^\ast] \stackrel{\alpha^\dagger}{\to} z_0 X \otimes \Lambda$. Grace à l'isomorphisme du Théorème~\ref{theo:corMotIso} ce dernier se relève en un morphisme canonique $\beta^\dagger$, et via la section canonique $\cI^\ast([X^c]) \stackrel{\sigma}{\to} \cI^\ast([X^c / m])$ de l'Equation~\ref{equa:section} un élément $\cI^\ast(\beta^\dagger) \sigma \in \hom(\cI^\ast([X^c]), \cI^\ast([\Delta^\ast])) \cong \hom(R\pi_{X!} \Lambda_X, \Lambda)$.

Pour calculer l'accouplement (\ref{equa:etalPair}) de $\cI^\ast(\beta^\dagger) \sigma$ et $[X^c] \to \cI^\ast$, on compose $\beta^\dagger$ avec $\chi$ et on utilise l'isomorphisme canonique $\Lambda \stackrel{\sim}{\to} \hom([\Delta^\ast], \cI^\ast)$ (Corollaire~\ref{coro:lambdaCalcul}). L'adjoint de $\chi \beta^\dagger$ fournit la même élément de $\Lambda$, donc définissons
\[ (\alpha, \phi)\et \stackrel{def}{=} \underline{C}_\ast(\chi) \beta \qquad \qquad \in \hom_{K(\PreShv(\Cor / k))}(\ZZ, \underline{C}_\ast \cI^\ast). \]

%\begin{equation} \label{equa:twoComp}
%\xymatrix{
%[\Delta^\ast]  \ar[d]_\alpha \ar[drr]^{\beta} \\
%[X^c / m] \ar@/_1em/[drrr]_{\chi}& [X^c] \ar[r] \ar[l] \ar[drr]^(0.7){\phi'} & z_0 X \ar[r]^\epsilon & \underline{C}_\ast z_0X \ar[d]^\phi \\
%&&& \cI^\ast
%}
%\end{equation}
%
%De l'autre côté, l'image de $(\alpha, \phi)$ sous l'accouplement (\ref{equa:singPair}) est la cohomologie $H^0$ des sections globales $\Gamma(k, -)$ de la composition $[k][n] \stackrel{\beta'^\dagger}{\to} \underline{C}_\ast z_0 X \stackrel{\phi}{\to} \cI^\ast[n]$ où $\beta'$ est la composition désignée dans (\ref{equa:twoComp}). Pour la comparaison avec $(\alpha, \phi)\et$ composons avec $\epsilon: \cI^\ast \to \underline{C}_\ast \cI^\ast$ (cette composition ne change pas l'élément de $\Lambda$ obtenu sous les identifications canoniques $\Lambda \stackrel{\sim}{\to} H^0 \cI^\ast(k) \stackrel{\sim}{\to} H^0 C_\ast \cI^\ast$). C'est-à-dire, nous avons défini
%\[ (\alpha, \phi)\singc \stackrel{def}{=} \epsilon\, \phi\, \beta'^\dagger \qquad \qquad \in \hom_{K(\PreShv(\Cor / k))}([k], \underline{C}_\ast \cI^\ast). \]

L'égalité de $(\alpha, \phi)\singc$ et $(\alpha, \phi)\et$ découlera du diagramme suivant.
\begin{equation} %\label{equa:adjointsAgree}
\xymatrix{
\ZZ \ar[r]_-\eta \ar@/^1.5em/[rr]^(0.7){\beta} \ar@/^3em/[rrr]^{\alpha} & \underline{C}_\ast[\Delta^\ast] \ar[r]_{\underline{C}_\ast (\beta^\dagger)} & 
\underline{C}_\ast [X^c / m] \ar[r] \ar@/_0.8em/[drr]|(0.545)\hole_(0.3){\underline{C}_\ast \chi} & \underline{C}_\ast z_0 X \otimes \Lambda \ar[r]^-{\psi} \ar[d]^(0.45){\textrm{\danger}} & \cI^\ast \ar[d]^{\epsilon} \\ &&& 
\underline{C}_\ast \underline{C}_\ast z_0 X \otimes \Lambda \ar[r]_-{\underline{C}_\ast(\psi)} & 
\underline{C}_\ast\cI^\ast
}
\end{equation}
Rapellons-nous qu'il y a deux choix canoniques (Équation~\ref{equa:twoAug}) pour le morphisme marqué avec \danger. Pour que le carré commute (dans $\Comp(\PreShv(\Cor / k))$), il faut choisir $\epsilon_{\underline{C}_\ast}$. Par contre, si on veut que le chemin de bas soit égal à la flèche $\underline{C}_\ast \phi'$ (dans $\Comp(\PreShv(\Cor / k))$), il faut choisir $\underline{C}_\ast(\epsilon)$. Heureusement, ces deux choix sont égaux dans la catégorie homotopique (Lemme~\ref{lemm:eilenbergZilber}). D'où, dans $K(\PreShv(\Cor / k))$ nous avons $(\alpha, \phi)\singc = (\alpha, \phi)\et$.%
%\[ (\alpha, \phi)\singc = \epsilon\, \phi\, \beta'^\dagger = (\underline{C}_\ast \phi') \beta^\dagger = (\phi'\beta)^\dagger = (\alpha, \phi)\et. \]
%
%%% THIS DIAGRAM ENDED UP BEING TO CONFUSING BECAUSE OF ALL THE C*S CROWDING IT UP
%\begin{equation} \label{equa:adjointsAgree}
%\xymatrix{
%[k] \ar@/_0.5em/[dr]_(0.3){\beta^\dagger}
%\ar@/^3em/[drr]^(0.85){\beta'^\dagger}
%\ar@{..>}[r] &
%\underline{C}_\ast[\Delta^\ast] \ar@{..>}[d]^{\underline{C}_\ast\alpha} 
%&& \\
%& \underline{C}_\ast [X^c] \ar@{..>}[r]
%\ar@<1.3ex> '[rrr]+<-3em,0em> [drr]^(0.4){\phi'^\dagger}
%& \underline{C}_\ast (z_0 X) \ar@{..>}@<0.7ex>[r]^-{\underline{C}_\ast(\epsilon)} \ar@{..>}@<-0.5ex>[r]_-{\epsilon_{\underline{C}_\ast}} \ar[d]^\phi & \underline{C}_\ast \underline{C}_\ast z_0 X \ar@{..>}[d]_{\underline{C}_\ast(\phi)} &&& \\
%& & \cI^\ast \ar[r]_\epsilon & \underline{C}_\ast\cI^\ast
%}
%\end{equation}
\end{proof}

\begin{prop} \label{prop:susEqualIvo}
Les morphismes $\PhiS$ et $\PhiR$ sont égaux. %C'est-à-dire,
%\marginpar{Does this display equation help or make the statement more confusing?}
%\[ \isoCHmot \circ \RIvo \circ \duaet = Tr^{CH} \circ \isoCHsing \circ \duas \circ \isoB \isoC\circ \trEt. \]
%\[ Tr^{CH} \circ \isoCHmot \circ Tr^M \circ \isoCHsing \circ \isosingmot \circ \RIvo \circ \duaet = Tr^{CH} \circ \isoCHsing \circ \duas \circ \isoB \isoC\circ \trEt. \]
\end{prop}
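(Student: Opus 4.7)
Le plan est de ramener l'énoncé à la Proposition~\ref{prop:pairing} appliquée à $\AA^j_X$, complétée par la compatibilité, sous la réalisation étale $R_\Lambda$, des isomorphismes de trace $\trEt$ et $\trM$. D'abord, j'utiliserais la factorisation explicite de $\isoCHmot$ fournie par (\ref{equa:CHsingMot}), à savoir $\isoCHmot = \trM \circ \isosingmot \circ (\isoCHsing)^{-1} \circ \trCH$, pour développer la Définition~\ref{defi:phiIvo} et obtenir
\[ \PhiR = \trCH^\# \circ ((\isoCHsing)^{-1})^\# \circ (\isosingmot)^\# \circ (\trM)^\# \circ \RIvo^\# \circ \duaet, \]
à comparer avec la Définition~\ref{defi:susIso}:
\[ \PhiS = \trCH^\# \circ ((\isoCHsing)^{-1})^\# \circ \duas \circ \isoCHsingcoh \circ (\trEt)^{-1}. \]
Le préfixe commun $\trCH^\# \circ ((\isoCHsing)^{-1})^\#$ se simplifie, et l'énoncé se ramène à l'égalité
\[ \duas \circ \isoCHsingcoh \circ (\trEt)^{-1} = (\isosingmot)^\# \circ (\trM)^\# \circ \RIvo^\# \circ \duaet \]
entre morphismes $H_c^{n-2j}(X, \Lambda(-j)) \to H_n\singh(z_0(\AA^j_X), \Lambda)^\#$.

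Ensuite, j'insérerais l'espace intermédiaire $H_c^n(\AA^j_X, \Lambda)$ et j'appliquerais la Proposition~\ref{prop:pairing} au schéma quasi-projectif équidimensionnel $\AA^j_X$ (de dimension $d + j$, satisfaisant les hypothèses du Théorème~\ref{theo:susMain} dès que $X$ les satisfait). Cela fournit
\[ \duas \circ \isoCHsingcoh = (\isosingmot)^\# \circ \RIvo^\# \circ \duaet_{\AA^j_X} \]
au départ de $H_c^n(\AA^j_X, \Lambda)$. Le problème se ramène alors à la compatibilité
\[ \duaet_{\AA^j_X} \circ (\trEt)^{-1} = (\trM)^\# \circ \RIvo^\# \circ \duaet_X, \]
c'est-à-dire au fait que $R_\Lambda$ envoie l'isomorphisme motivique $\trM : M^c(\AA^j_X) \cong M^c(X)(j)[2j]$ sur l'isomorphisme étale $\trEt$, la compatibilité aux accouplements $\duaet$ résultant alors de leur naturalité.

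La partie la plus délicate sera cette dernière compatibilité entre la formule motivique du fibré projectif \cite[Corollary 4.1.8]{Voev00} (ou \cite[Theorem 5.5.9]{Kel12}) et la dualité de Poincaré étale \cite[XVIII.2.8.1]{SGA43}. Je procéderais par itération sur $j$, se ramenant au cas $j = 1$ grâce à la compatibilité de $R_\Lambda$ au produit tensoriel. Le cas $j = 1$ se vérifierait en exhibant les deux traces à partir de la compactification $\AA^1 \subset \PP^1$ et du triangle distingué associé, puis en invoquant l'identification canonique $R_\Lambda(M^c(\AA^1) \otimes \Lambda) = R\pi_{\AA^1!}\Lambda \cong \Lambda(-1)[-2]$. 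Une fois cette compatibilité de traces acquise, les trois étapes s'emboîtent pour donner $\PhiS = \PhiR$.
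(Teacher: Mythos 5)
Votre démarche est correcte et coïncide pour l'essentiel avec celle du texte : ramener l'énoncé à la Proposition~\ref{prop:pairing} appliquée à $\AA^j_X$ (le cas $j=0$), puis conclure par la compatibilité de $\trM$ et $\trEt$ sous la réalisation étale, qui est exactement le contenu du Lemme~\ref{lemm:realTrace} (dont votre esquisse --- monoïdalité de $R_\Lambda$ et calcul explicite via $\AA^1 \subset \PP^1$ --- reproduit aussi la preuve donnée en appendice).
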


%\begin{rema}
%Can drop the $Tr^{CH}$ immediately.
%\begin{align}
%\isoCHmot \circ Tr^M \circ \isoCHsing \circ \isosingmot \circ \RIvo \circ \duaet
%&\stackrel{Lemma~\ref{prop:pairing}}{=} \isoCHmot \circ Tr^M \circ \isoCHsing \circ \duas \circ \isoB \circ \isoC \\
%\end{align}
% THERE IS NO WAY THIS KIND OF SEQUENCE OF EQUALITIES WOULD MAKE ANYTHING CLEARER OR EASIER TO READ OR EASIER TO CHECK 
%\end{rema}

\begin{proof}
C'est immédiat après la Proposition~\ref{prop:pairing} et le Lemme~\ref{lemm:realTrace} :

Dans le cas $j = 0$, par définition, $\PhiS$ est la composition du chemin supérieur dans le diagramme 
\[ \xymatrix{
\hom(\Lambda, R\pi_{X!} \Lambda_X[n]) \ar[d] & \ar@{-}[l]_-{\isoCHsingcoh}^\cong H^n\singc(z_0 X, \Lambda) \ar[d]^\viso \\
\hom(R\pi_{X!} \Lambda_X[n], \Lambda)^\# \ar[r]_-{\RIvo \circ \isosingmot} & H_n\singh(z_0X, \Lambda)^\# & \ar[l]^{\isoCHsing}_\cong CH^d(X, n; \Lambda)^\#
} \]
et $\PhiR$ est la composition du chemin inférieur dans ce diagramme. Le cas $j = 0$ n'est que la Proposition~\ref{prop:pairing} qui dit que le carré est commutatif.

Pour le cas $j > 0$, par définition nous avons ${\PhiS}_{, X} = \trCH {\PhiS}_{, \AA^j_X} \trEt$. Puisque on vient de voir que ${\PhiS}_{, \AA^j_X} = {\PhiR}_{, \AA^j_X}$ il suffit de montrer que ${\PhiR}_{, X} = \trCH {\PhiR}_{, \AA^j_X} \trEt$. Cela découle du diagramme suivant, dans lequel les colonnes sont les morphismes $\PhiR$, par définition.
\begin{equation*}
\xymatrix{
\hom(\Lambda, R\pi_{X!} \Lambda_X(-j)[n-2j]) \ar[d]_{\duaet}  %
%\ar@<-4em>@/_3em/[ddd]_{\PhiR^j}^{(\textrm{def.})} 
& 
\ar[l]_-{\trEt} \hom(\Lambda, R\pi_{\AA^j_X!} \Lambda_X[n])  \ar[d]^{\duaet} %
%\ar@<4em>@/^4em/[ddd]^{\PhiR^0}_{(\textrm{def.})} 
\\
\hom(R\pi_{X!} \Lambda_X(-j)[n-2j], \Lambda)^\# \ar[d]_{\RIvo} \ar[r]^-{\trEt} & 
\hom(R\pi_{\AA^j_X!} \Lambda_{\AA^j_X}[n], \Lambda)^\#  \ar[d]^{\RIvo}  \\
\hom(\ZZ[n], M^c(X)(j)[2j] \otimes \Lambda)^\#  \ar[d]^{(\textrm{def.})}_\isoCHmot & \ar[l]_-{\trM} 
\hom(\ZZ[n], M^c(\AA^j_X) \otimes \Lambda)^\# \ar[d]^{\isoCHmot}  \\
CH^{d + j}(X, n; \Lambda)^\#  &
\ar[l]_{\trCH} CH^{d + j}(\AA^j_X, n; \Lambda)^\# 
}
\end{equation*}
La commutativité du carré en bas est tautologique (c'est la définition de $\isoCHmot$ donné dans l'Equation~\ref{equa:CHsingMot}). La commutativité du carré en haut n'est que l'associativité de la composition, et la commutativité du carré au centre est le sujet du Lemma~\ref{lemm:realTrace}.
%
%Par définition 
%\[ \PhiS^{j} = \pi_{\AA^j}^* \PhiS^0 \Tr \]
%et
%\[ \Phi^j_{Ivo} = \Phi^M_{CH} \Phi_{et}^M = \pi \Phi^M_{CH} \Tr \Phi_{et}^M \]
%
%. On vient de voir que $\PhiS^0 = \PhiR^0$, d'où, il suffit de voir que $\PhiR^{j} = \pi_{\AA^j}^* \PhiR^0 \Tr$. Encore par définition, nous avons $\PhiR^j = \Phi^M_{CH} \Phi_{et}^M = (\pi \Phi^M_{CH} \Tr ) \Phi_{et}^M$ et .
%
%\begin{align}
%\PhiS^j &= \pi \Phi^0_{Sus} \Tr  \qquad\qquad  def\\
%&= \pi \Phi^0_{Ivo} \Tr  \qquad \qquad Prop 13 \\
%&= \pi \Phi^M_{CH} \Phi_{et}^M \Tr \qquad \qquad def \\
%&= \pi \Phi^M_{CH} \Tr \Phi_{et}^M  \qquad \qquad Lemme\ to\ do \\
%&= \Phi^M_{CH} \Phi_{et}^M \qquad \qquad def  \\
%&= \PhiR^j \qquad \qquad def
%\end{align}
\end{proof}

\begin{rema}
Malheureusement, on ne connait pas une référence pour la compatibilité de la réalisation étale avec les structures de dualité. Cette compatibilité est bien connue mais elle dépasse les objectifs de ce note.

Il découle facilement de cette compatibilité avec la dualité et la Proposition~\ref{prop:susEqualIvo} que quand $X$ est lisse l'isomorphisme de Suslin $\PhiS$ est le morphisme $\PhiBL$ de la conjecture de Beilinson-Lichtenbaum sous les identifications canoniques fournis par la dualité, i.e., le carré suivant est commutatif:
\[ \xymatrix@R=1em{
H_{c}^{n - 2j}(X, \Lambda(-j)) \ar@{}[r]|\cong^{\PhiS} \ar@{}[d]|\viso & 
CH^{j + d}(X, n; \ZZ / m)^\# \ar@{}[d]|\viso \\
H_{et}^{2(j + d) - n}(X, \Lambda(j + d))^\# \ar[r]_{\PhiBL} & 
H_{\mathcal{M}}^{2(j + d) - n}(X, \Lambda(j + d))^\# 
} \]
\end{rema}

%\begin{coro} \label{coro:BL}
%Quand $X$ est lisse, l'isomorphisme de Suslin $\PhiS$ est le morphisme du conjecture de Beilinson-Lichtenbaum sous les identifications canoniques fournis par la dualité, i.e., le carré suivant est commutatif:
%\[ \xymatrix@R=1em{
%H_{c}^{n - 2j}(X, \Lambda(-j)) \ar@{}[r]|\cong^{\PhiS} \ar@{}[d]|\cong & 
%CH^{j + d}(X, n; \ZZ / m)^\# \ar@{}[d]|\cong \\
%H_{et}^{2(j + d) - n}(X, \Lambda(j + d))^\# \ar[r]_{\PhiBL} & 
%H_{\mathcal{M}}^{2(j + d) - n}(X, \Lambda(j + d))^\# 
%} \]
%\end{coro}

%\begin{proof}
%Il découle du diagramme suivant où $d = \dim X$.
%\[ \xymatrix{
%H^{n - 2j}_c(X, \ZZ / m(-j)) \ar@{}[r]|\cong^{\PhiS} \ar@{}[d]|\cong & CH^{j + d}(X, n; \ZZ / m)^\# \ar[dd]^{\isoCHmot}_\cong \\
%\hom(\Lambda, R\pi_{X!}\Lambda_X(-j)[n-2j]) \ar[d]_\duaet & \\ 
%\hom(R\pi_{X!}\Lambda_X(-j)[n-2j], \Lambda)^\# \ar[r]^{\RIvo} \ar@{}[d]|\cong^{DUALITY} & \hom(\ZZ[n], M^c(X)(j)[2j] \otimes \Lambda)^\# \ar@{}[d]|\cong^{DUALITY} \\
%\hom(\Lambda, R\pi_{X*}\Lambda_X(d + j)[2(d + j) - n])^\# \ar[r]^-\RIvo & \hom(M(X)[n], \ZZ(j + d)[2(j + d)] \otimes \Lambda)^\#
%} \]
%On vient de démontrer que le carré supérieur est commutatif, et la commutativité du carré inférieure est discuté dans la Proposition~\ref{}***.
%\end{proof}

\appendix

\section{Préfaisceaux de cycles relatifs} \label{sect:cyclesRelatifs}

Le préfaiseau $z_r(X)$ de \cite{Sus00} est, en fait, le préfaisceau $z\equi(X / k, r)$ de \cite{SV00}. Au lieu dele définir, ce n'est pas beaucoup plus difficil de se rappeler la théorie plus générale, ce qu'on fera maintenant. Remarquons que l'on donne une \emph{définition} et une grande partie des propriétés importantes des préfaisceaux de cycles relatifs et la catégorie des correspondances en moins de trois pages, même pour n'importe quels schémas noetherians et séparés. Il semble qu'il n'existe pas d'exposition aussi concise dans la littérature. D'habitude, la définition est l'aboutissement d'une quinzaine de pages de travail.

\subsection{Definition}
Dans \cite{SV00}, Suslin-Voevodsky définissent des préfaisceaux $z(X / S, r)$, $z\equi(X / S, r)$, $c(X / S, r)$, $c\equi(X / S, r)$ sur $\Sch / S$ pour $S$ un schéma noetherien, $X \in \Sch / S$ et $r \in \ZZ$. Soit $PreCycl(X / S, r)$ le groupe abélien libre engendré par les points $z$ de $X$ qui sont au-dessus d'un point générique de $S$, et de dimension $r$ dans leur fibre. Le problème, c'est que les groupes \mbox{$PreCycl(Y \times_S X / Y, r)$} pour $Y \in \Sch / S$ ne possèdent pas une fonctorialité en $Y$ raisonnable (par raisonnable on veut dire possédant les propriétés  \ref{rs:axiomRed} et \ref{rs:axiomComp} en-dessous). La solution est de travailler avec des sous-groupes des \mbox{$PreCycl(Y \times_S X / Y, r)$}. Pour un morphisme $f: Y \to X$ de $\Sch / S$ et $\sum n_i z_i$ une somme de points de $X$ on définit
\begin{equation} \label{equa:pullback}
 f^* \left (\sum n_i z_i \right ) = \sum n_i m_{ij} w_{ij} 
\end{equation}
où les $w_{ij}$ sont les points génériques de $Y \times_X z_i$ et $m_{ij} = \length \OO_{Y \times_X z_i, w_{ij}}$.

\begin{defi}
On peut définir le préfaisceau $z(X / S, r)$ comme le plus grand préfaisceau qui satisfait les conditions suivants : 
\begin{enumerate}
 \item \label{rs:axiom} Pour tout $Y \in \Sch / S$, le groupe $z(X / S, r)(Y)$ est un sous-groupe de $PreCycl(Y \times_S X, / Y, r)$. 
 \item \label{rs:axiomRed} Si $Y_{red} \to Y$ est l'inclusion canonique, $z(X / S, r)(Y) \to z(X / S, r)(Y_{red})$ est le morphisme évident.
 \item \label{rs:axiomComp} Soit $i: Y' \to Y$ un sous-schéma intègre de point générique $\iota: y \to Y$. On suppose que $\alpha = \sum n_i z_i \in z(X / S, r)(Y)$ soit tel que $y$ se trouve dans le lieu plat de $\amalg \overline{\{z_i\}} \to Y$. Alors $z(X / S, r)(i)(\alpha) = (\iota \times_S X)^*\alpha$ (vu comme cycle sur $Y' \times_S X$ via l'immersion $y \times_S X \subset Y' \times_S X$).
% \[ z(X, r)(\iota)(\sum n_i z_i) = \sum n_i m_{ij} w_{ij} \]
%où les $w_{ij}$ sont les points génériques de $y \times_Y \overline{\{z_i\}}$ et $m_{ij} = \length \OO_{y \times_Y \overline{\{z_i\}}, w_{ij}}$.
\end{enumerate}

Le préfaisceau $z\equi(X / S, r)$, (resp. $c(X / S, r)$, $c\equi(X / S, r)$) est par définition, le sous-préfaisceau de $z(X / S, r)$ des cycles $\sum n_i z_i \in z(X / S, r)(Y)$ tel que chaque $\overline{\{z_i\}} \to Y$ est équidimensionnel (resp. proper, équidimensionnel et proper).
\end{defi}

Soit $f: Y' \to Y$ un morphisme de $\Sch / S$. Pour alléger la notation, suivons Ivorra en écrivant $f^\circledast = z(X / S, r)(f)$ (Suslin-Voevodsky écrivent $cycl(f)$). Soit $i: \eta \to Y'$ l'inclusion des points génériques de $Y'$ et $\alpha \in z(X / S, r)(Y)$. Les axiomes \ref{rs:axiomRed} et \ref{rs:axiomComp} impliquent que $i^\circledast$ est injective, donc il suffit de connaitre $(fi)^\circledast(\alpha)$ pour connaitre $f^\circledast(\alpha)$. En particulier, si $f: Y' \to Y$ induit un isomorphisme sur un ouvert dense de $Y_{red}$, alors $f^\circledast$ est le morphisme évident induit par l'identification des fibres génériques de $Y \times_S X \to Y$ avec les fibres génériques de $Y' \times_S X \to Y'$. Par conséquence, avec ces axiomes et le théorème de platification de Raynaud-Gruson \cite[Proposition 2.2.2]{SV00} on peut calculer $f^\circledast$ pour n'importe quel morphisme de $\Sch / S$.

Si $S$ est regulier et $\iota: s \to S$ l'inclusion d'un point on peut aussi calculer $z(X / S, r)(\iota)$ avec la formule des $Tor$ de Serre quand $S$ est régulier \cite[Lemma 3.5.9]{SV00}.

\subsection{Caractérisation}
Un théorème (\cite[Corollaries 3.4.5]{SV}) affirme que pour $Y$ régulier l'inclusion $z_{equi}(X / S, r)(Y) \subseteq PreCycl(Y \times_S X / S, r)$ est une égalité. En général, on peut caractériser les cycles de $z(X / S, r)$ comme suit. Soit $\alpha = \sum n_i z_i \in PreCycl(Y \times_S X / Y, r)$ et $p: Y' \to Y$ un éclatement à centre rare. Puisque les fibres génériques de $Y' \times_S X \to Y'$ et $Y \times_S X \to Y$ sont isomorphes, on peut considérer $\alpha$ comme un élément de $PreCycl(Y' \times_S X / Y', r)$. Disons que $p$ \emph{platifie} $\alpha$ si $Y'$ est réduit, le centre de $p$ est rare, et chaque $\overline{\{z_i\}} \to Y'$ est plat (adhèrence dans $Y' \times_S X$). Dans ce cas, $\alpha \in z_{equi}(X / S, r)(Y')$ \cite[Corollary 3.3.11]{SV00}. Alors, un cycle $\sum n_i z_i \in PreCycl(Y \times_S X / Y, r)$ est dans $z(X / S, r)(Y)$ si et seulement si pour chaque diagramme commutatif
\[ \xymatrix{
Spec(K) \ar[d]_j \ar@<0.5ex>[r]^i \ar@<-0.5ex>[r]_{i'}  & Y' \ar[d]^p \\
y \ar[r] & Y
} \]
avec $y$ un point de $Y$, $p$ un éclatement qui platifie $\alpha$, et $K$ un corps, nous avons
\begin{enumerate}
 \item  $i^*\alpha = i'^*\alpha$, et
 \item \label{enum:integral} ce cycle est dans l'image de $j^*: z(X / S, r)(y) \to z(X / S, r)(K)$.
\end{enumerate}
Les morphismes $i^*, i'^*, j^*$ sont ceux définis dans l'Equation~\ref{equa:pullback}. %La propriété \ref{enum:integral} est nécessaire seulement quand $k(y)$ est de caractéristique $p > 0$, et trait le fait que si $K / k(y)$ est inséparable $z(X / S, r)(j)$ introduit des facteurs de $p$ dans les coéfficients.

\subsection{Fonctorialité}

Soit $f: X' \to X$ un morphisme de schémas. Pour $\sum n_i x_i$ une somme de points de $X$, on définit
\begin{equation} \label{equa:pushForward}
f_* \left (\sum n_i x_i \right )  = \sum n_i d_i f(x_i)
\end{equation}
où $d_i = [k(x_i), k(f(x_i))]$ si cette quantité est finie et zéro sinon. Avec cette définition, les préfaisceaux $c(X / S, r)$ et $c\equi(X / S, r)$ sont covariants en $X$ pour tout $S$-morphisme, et les préfaisceaux $z(X / S, r)$ et $z\equi(X / S, r)$ sont covariants en $X$ pour tout $S$-morphisme propre \cite[Corollary 3.6.3]{SV00}.

D'ailleurs, les morphismes de l'Equation~\ref{equa:pullback} induit une structure de contravariance en $X$ sur $z(X / S, r)$, $z\equi(X / S, r)$, $c(X / S, r)$ et $c\equi(X / S, r)$ pour les morphismes $f: X' \to X$ plat équidimensionnel (qui augmente $r$ par la dimension relative de $f$) \cite[Lemma 3.6.4]{SV00}.

Enfin, pour $F(-, -) = z(-, -)$, $z\equi(-, -)$, $c(-, -)$ ou $c\equi(-, -)$ il y a des morphismes de \emph{groupes}
\[ Cor(-, -): F(Y / X, m) \times F(X / S, n) \to F(Y / S, m + n) \]
définis comme suit. Pour $\alpha = \sum n_i z_i \in F(X / S, n)$ soient $\iota_i: \overline{\{z_i\}} \to Y$ les inclusions canoniques. Alors, $Cor(-, \alpha) = (\iota_i \times_X Y)_*\circ F(Y / X)(\iota_i)$ \cite[Corollary 3.7.5]{Sus00}.

Les morphismes $f_*, f^*$ et $Cor$ satisfont toutes les compatibilités qu'on souhaiterait \cite[Sections 3.6 and 3.7]{SV00}.

\subsection{Transferts}

Par définition, la catégorie $\Cor / S$ a un objet $[X]$ pour chaque objet de $\Sch / S$ et les morphismes sont $\hom_{\Cor / S}([Y], [X]) = c\equi(X / S, 0)(Y)$. La composition de $\alpha \in \hom_{\Cor / S}([Y], [X])$ et $\beta \in \hom_{\Cor / S}([X], [W])$ est
\[ \beta \circ \alpha = p_* Cor( q^*\beta , \alpha) \]
où $q: X \times_S Y \to Y$ et $p: X \times_S Y \times_S W \to W$ sont les projections canonique. Il y a un foncteur canonique $[-]: \Sch / S \to \Cor / S$ qui envoie un morphisme $f: Y \to X$ vers la somme $\sum \eta_i \in PreCycl(Y \times_S X / Y, 0)$ des points génériques de $Y$ vus comme points du graphe $\Gamma_f \subset Y \times_S X$ de $f$.

Un \emph{préfaisceau avec transferts} est un préfaisceau additif sur $\Cor / S$. Pour $\tau$ une topologie sur $\Sch / S$, un \emph{$\tau$-faisceau avec transferts} est un préfaisceau avec transferts tel que sa restriction à $\Sch / S$ est un $\tau$-faisceau. Il est alors clair que tout les préfaisceaux $z(X / S, r)$, $z\equi(X / S, r)$, $c(X / S, r)$ et $c\equi(X / S, r)$ sont des préfaisceaux avec transferts.

%*** PRODUCT STRUCTURE ***
%
%\begin{prop}[{\cite[Proposition 4.2.14]{SV00}}] \label{prop:hTransfers}
%$\Shv_h(\Cor / S) \to \Shv_h(\Sch / S)$ est une équivalence.
%\end{prop}

\section{La realisation étale}

Dans \cite{Ivo05}, Ivorra définit un foncteur monoïdal symétrique (dont le cible est la catégorie dérivée des faisceaux de $\ZZ / m$-modules sur le petit site étale de $k$, avec $m$ premier à la caractéristique de $k$)
\[ R_\Lambda: DM\gm\eff(k)\op \to D(k, \ZZ / m). \]
On rappelle que ce foncteur admet la description suivante. On choisit un complexe acyclique $\ZZ / m \to J^\ast$ dans $\Shv_{et}(\Cor / k, \ZZ / m)$ dont $H^p_{et}(Y, J^n) = 0$
pour tout $Y \in \Sch / k$, $p > 0$, et $n \geq 0$. Ivorra choisit la résolution de Godement mais on peut utiliser aussi le $\cI^\ast$ choisi au-dessus (Théorème~\ref{theo:premierChapeau}). Ce complexe induit alors un foncteur
\[ \Comp(\Cor / k)\op \to D(k, \Lambda);\quad  (\dots \to Y_{-1} \to Y_0 \to Y_1 \to \dots ) \mapsto \Tot J^\ast(Y_\ast) \]
qui se factorise par $DM\eff(k)$. Le lecteur peut consulter \cite{KS14} pour plus de détails. En particulier, puisque $J^\ast(Y)$ calcule la cohomologie étale pour chaque $Y \in \Sch / k$, le complexe $J^\ast([X^c])$ calcule la cohomologie à support compact (cf. Equation~\ref{equa:Xcdef}).

Or, le morphisme canonique $[X^c] \to z_0X$ devient un isomorphisme dans $DM\eff(k)$ (\cite[Proposition 4.1.5]{Voev00} ou \cite[Proposition 5.5.5]{Kel12} pour la version sans supposer la résolution des singularities). Par conséquence, il y a des identifications canoniques et naturelles
\begin{equation} \label{equa:realMotiCompSupp}
R_\Lambda(M^c(X)) = R\pi_{X!}\Lambda_X, \qquad \qquad R_\Lambda(M^c(X) \otimes \Lambda) = R\pi_{X!}\Lambda_X \oplus R\pi_{X!}\Lambda_X[1]
\end{equation}
où on écrit $\pi_{S}: S \to k$ pour le morphisme structural de $S \in \Sch / k$ (cf. la Remarque~\ref{rema:sectionCanonique} pour la deuxième).

Par définition, $\ZZ(1)[2]$ dans $DM\eff(k)$ est l'image du complexe $([\PP^1] {\stackrel{\pi_{\PP^1}}{\to}} [k])$ concentré en degré $0$ et $1$. Puisque le groupe %
%$R\pi_{\PP^1*} \Lambda \cong R\pi_{k*} \Lambda \oplus H^2(\PP^1, \Lambda)[-2]$,
$H^n_{et}(\PP^1, \Lambda)$ est nul pour $n \neq 0, 2$, et $H^0\et(k, \Lambda) \to H^0\et(\PP^1, \Lambda)$ est un isomorphisme, l'image de $\ZZ(1)[2]$ sous $R_\Lambda$ est $H^2(\PP^1, \Lambda)$. Alors, l'isomorphisme $H^2(\PP^1, \Lambda) \cong \Lambda(-1)$ induit une identification canonique
\begin{equation} \label{equa:realTwist}
R_\Lambda(\ZZ(1)[2]) = \Lambda(-1)[-2].
\end{equation}

\begin{prop} \label{lemm:realTrace}
Avec les identifications canoniques (\ref{equa:realMotiCompSupp}) et (\ref{equa:realTwist}) l'image du morphisme
$ \trM: \ZZ(j)[2j] \to M^c(X) $
de \cite[Corollary 4.2.4]{Voev00} sous la réalisation étale est $(-)(-d)[-2d]$ appliqué au morphisme 
$ \trEt: R\pi_{X!}\Lambda_{X}(d)[2d] \to \Lambda $
de \cite[XVIII.2.8.1]{SGA43}. %Du coup, $\RIvo$ est compatible avec les deux dualités dans le sens où
%\begin{equation*} \xymatrix{
%\hom_{DM\gm\eff(k)}(\ZZ, M^c(X)[n]) \ar@{}[r]|\cong \ar[d] & \hom_{DM\gm\eff(k)}(M(X), \ZZ[n]) \ar[d] \\
%\hom(R\pi_{X!}\Lambda_X, \Lambda[n]) \ar@{}[r]|\cong & \hom(\Lambda, R\pi_{X*}\Lambda_X[n])
%} \end{equation*}
%est commutatif 
\end{prop}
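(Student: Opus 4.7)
La stratégie est de me ramener au cas fondamental $X = \AA^1$, où les deux morphismes trace peuvent être comparés explicitement via la cohomologie de $\PP^1$. Plus précisément, je commencerais par observer que $\trM$ et $\trEt$ sont multiplicatifs par rapport au produit extérieur : $\trM_{X \times Y}$ se décompose comme $\trM_X \otimes \trM_Y$ (via l'isomorphisme de Künneth pour $M^c$, cf. \cite[Proposition 4.1.7]{Voev00} ou \cite[Section 5.5]{Kel12}), et $\trEt_{X \times Y}$ admet la même décomposition (formule de Künneth étale). Puisque $R_\Lambda$ est monoïdal symétrique (\cite{Ivo05}), ces décompositions sont compatibles sous réalisation, et l'assertion pour $X = \AA^d$ se ramène à celle pour $X = \AA^1$. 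Pour traiter le cas d'un $X$ lisse quelconque, j'utiliserais la descente Nisnevich combinée avec la naturalité des deux traces par rapport aux morphismes étales : tout $X$ lisse admet Nisnevich-localement une projection étale sur $\AA^d$.

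Pour le cas de base $X = \AA^1$, j'utiliserais la construction parallèle des deux traces via le triangle de localisation-compactification $\AA^1 \subset \PP^1 \supset \{\infty\}$. Du côté motivique, $\trM: \ZZ(1)[2] \stackrel{\sim}{\to} M^c(\AA^1)$ provient du triangle distingué $M(\{\infty\}) \to M(\PP^1) \to M^c(\AA^1)$ combiné avec la décomposition canonique $M(\PP^1) = \ZZ \oplus \ZZ(1)[2]$ (\cite[Corollary 4.1.8]{Voev00}). Du côté étale, $\trEt: R\pi_{\AA^1!}\Lambda(1)[2] \stackrel{\sim}{\to} \Lambda$ provient du triangle analogue en cohomologie étale et de la décomposition $H^*(\PP^1, \Lambda) = \Lambda \oplus \Lambda(-1)[-2]$. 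Comme l'identification canonique (\ref{equa:realTwist}) est \emph{définie} précisément via $H^2(\PP^1, \Lambda) \cong \Lambda(-1)$, et comme $R_\Lambda$ préserve les triangles distingués, la compatibilité dans le cas $X = \AA^1$ devient essentiellement tautologique.

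La principale difficulté sera la vérification rigoureuse de la multiplicativité et de la descente Nisnevich pour les morphismes trace via la réalisation — ces compatibilités sont implicites dans \cite{Ivo05} mais demandent un travail technique non négligeable, en particulier pour s'assurer que les isomorphismes de projection $M^c(\AA^1_X) \cong M^c(X)(1)[2]$ et $R\pi_{\AA^1_X!}\Lambda \cong R\pi_{X!}\Lambda(-1)[-2]$ s'identifient bien sous $R_\Lambda$. Une approche alternative plus conceptuelle serait de caractériser $\trM$ (resp. $\trEt$) comme l'image de l'identité sous l'unité de l'isomorphisme de dualité $M(X)^\vee \cong M^c(X)(-d)[-2d]$ (\cite[Proposition 4.2.3]{Voev00}) (resp. $R\pi_{X*}\Lambda_X \cong R\pi_{X!}\pi_X^!\Lambda$ avec $\pi_X^!\Lambda = \Lambda_X(d)[2d]$), et d'invoquer la compatibilité de $R_\Lambda$ avec ces deux structures de dualité ; c'est précisément cette compatibilité que l'auteur signale comme bien connue mais dépassant les objectifs de la présente note.
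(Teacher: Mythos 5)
Your base case coincides with the paper's: both reduce to $\AA^1/k$ and compare the two traces through the $\PP^1$-compactification, using that the identification (\ref{equa:realTwist}) is \emph{by definition} $H^2(\PP^1,\Lambda)\cong\Lambda(-1)$. The paper makes this concrete by representing $\ZZ(1)$ by the complex $([\PP^1]\to[k])$ and $M^c(\AA^1)$ by $([k]\to[\PP^1])\to z_0\AA^1$, so that $\trM$ is the composite through $[\PP^1]$, whose realization is exactly the factorization $R\pi_{\AA^1!}\Lambda\to R\pi_{\PP^1!}\Lambda\to\underline{H}^2R\pi_{\PP^1!}\Lambda[-2]\cong\Lambda(-1)[-2]$ defining $\trEt(-1)[-2]$; your triangle-plus-decomposition formulation is the same argument. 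Where you diverge is the reduction: the trace actually used is the relative one for $\AA^j_X\to X$, which is $\id_{M^c(X)}$ tensored with the trace for $\AA^j/k$, so monoidality of $R_\Lambda$ alone reduces everything to $X=k$ and $j=1$ in one step. The Nisnevich-local reduction of a general smooth $X$ to $\AA^d$ that you propose is not needed and would import a genuine extra verification (descent compatibility of both traces) that the paper never has to make; likewise your duality-theoretic alternative is precisely the route the paper explicitly declines in the remark following Proposition~\ref{prop:susEqualIvo}. So the proof goes through, but the only content you actually need is your second paragraph together with monoidality.
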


\begin{proof}
Puisque la réalisation est un foncteur monoïdal, il suffit de traiter le cas $X = k$.%\marginpar{Should add more details about the first reduction?} En plus, il suffit de traiter le cas $j = 1$ parce que les deux morphismes $\trEt$ et $\trM$ sont définis par récurrence.

Pour le morphisme $\trM$, considérons les morphismes de $\Comp(\PreShv(\Cor / k))$ suivants.
\[ \biggl ( \underset{0}{[\PP^1]} {\to} \underset{1}{[k]} \biggr )\ \stackrel{a}{\to}\ [\PP^1]\ \stackrel{b}{\to}\ \biggl ( \underset{-1}{[k]} {\to} \underset{0}{[\PP^1]} \biggr ) \ \stackrel{c}{\to}\ z_0\AA^1. \]
Par définition, l'image du premier objet dans $DM\eff(k)$ est $\ZZ(1)$, l'image du dernier est $M^c(\AA^1)$, et l'image de la composition est $\Tr^M$. Le morphisme $c$ est le morphisme qui induit l'identification (\ref{equa:realMotiCompSupp}).

Le morphisme $\trEt(-1)[-2]$ est par définition la composition
\[ R\pi_{\AA^1!}\Lambda\ \stackrel{d}{\to}\ R\pi_{\PP^1!}\Lambda\ \stackrel{e}{\to}\ \underline{H}^2R\pi_{\PP^1!}\Lambda[-2] \stackrel{f}{\cong} \Lambda(-1)[-2]. \]
Le morphisme $f$ est $(-1)[-2]$ du morphisme qui induit l'identification (\ref{equa:realTwist}).

La réalisation de $a$ (resp. $b$) est $e$ (resp. $d$). D'où, sous les identifications canoniques, l'image de $\trM$ et $\trEt(-1)[-2]$.
\end{proof}

\bibliographystyle{alpha}
\bibliography{bib}

\end{document}